\theoremstyle{definition}
\newtheorem{Definition}{Definition}
\newtheorem{Proposition}{Proposition}
\newtheorem{Lemma}{Lemma}
\newtheorem{Corollary}{Corollary}
\newtheorem{Theorem}{Theorem}
\newtheorem{Remark}{Remark}
\let\bs\boldsymbol
\def\aa{{\bs a}}
\def\dd{{\bs d}}
\def\zz{{\bs z}}
\newcommand{\bA}{\mathsf{A}}
\newcommand{\bT}{\mathsf{T}}
\newcommand{\bK}{\mathsf{K}}
\newcommand{\qm}{\textnormal{\textsf{QM}}}
\newcommand{\Lie}{\mathrm{Lie}}
\newcommand{\Wall}{\mathsf{Wall}}
\newcommand{\Res}{\mathsf{Res}}
\newcommand{\Pic}{\mathrm{Pic}}
\newcommand{\Ind}{\mathrm{Ind}}
\newcommand{\stab}{\mathrm{Stab}}
\newcommand{\sympow}{{S}^{\scriptscriptstyle\bullet}}
\newcommand{\extpow}{{\bigwedge}^{\scriptscriptstyle\bullet}}
\def \vrs {\widehat{{\O}}_{{\rm{vir}}}}
\def\O {{\mathcal{O}}}
\title{Euler characteristic of stable envelopes}
\author{Hunter Dinkins and Andrey Smirnov}
\date{}
\begin{document}

\maketitle
\begin{abstract}
In this paper we prove a formula relating the equivariant Euler characteristic of $K$-theoretic stable envelopes to an object known as the index vertex for the cotangent bundle of the full flag variety. Our formula demonstrates that the index vertex is the power series expansion of a rational function. This result is a consequence of the 3d mirror self-symmetry of the variety considered here. In general, one expects an analogous result to hold for any two varieties related by 3d mirror symmetry. 
\end{abstract}

\tableofcontents

\section{Introduction}
Let $X$ be a smooth symplectic quasiprojective variety. We make the following assumptions:
\begin{itemize}
    \item A torus $\bT$ acts on $X$ such that it scales the symplectic form with character $\hbar$. We denote by $\bA:=\ker(\hbar)\subset\bT$ the subtorus preserving the symplectic form.
    \item The $\bT$-action has finitely many fixed points.
    \item The tangent bundle of $X$ has a polarization $T^{1/2}X$. In other words, there exists some class $T^{1/2}X \in K_{\bT}(X)$ so that the tangent bundle of $X$ decomposes as
    $$
    TX= T^{1/2}X + \hbar^{-1} \left(T^{1/2} X\right)^{\vee} \in K_{\bT}(X)
    $$
    \item A chamber $\mathfrak{C}\subset \Lie_{\mathbb{R}}(\bA)$ is fixed, which is a choice of connected component of the complement of
    $$
 \bigcup_{p\in X^{\bA}} \bigcup_{w\in \text{char}_{\bA}(T_pX)}\{\sigma\in \Lie_{\mathbb{R}}(\bA) \, \mid \, \langle \sigma, w \rangle =0\}
    $$
    where $\langle \cdot, \cdot \rangle$ is the pairing of characters and cocharacters. This chamber decomposes the tangent space at the fixed points into attracting and repelling directions.
\end{itemize}
Our main interest is the case when $X$ is a Nakajima quiver variety of linear or affine type $A$. If such a variety satisfies some natural conditions which hold for quiver varieties, it is known that the cohomological, $K$-theoretic, and elliptic stable envelopes exist, see \cite{AOElliptic} and \cite{indstab1}.

For a torus fixed point $p \in X^{\bT}$, the $K$-theoretic stable envelope provides a class
$$
\stab^{s,X,K}_{\mathfrak{C},T^{1/2}X}(p) \in K_{\bT}(X)
$$
which depends on the chosen polarization and chamber, as well as a generic choice of $s \in \Pic(X) \otimes_{\mathbb{Z}} \mathbb{R}$ called the slope. For an appropriately normalized version of the stable envelope, we consider the $K$-theoretic equivariant Euler characteristic
$$
\chi\left(\stab^{s,X,K}_{\mathfrak{C},T^{1/2}X}(p)\right) \in K_{\bT}(pt)_{loc}= \mathbb{C}(\aa,\hbar)
$$
where $\aa$ denotes the equivariant parameters of the torus $\bA$. A natural question is to ask for an explicit description of this rational function and to study its expansion as a series in $\aa$.

In this paper, we study this problem for the special case when $X$ is the cotangent bundle of the full flag variety. This variety is known to be self-dual with respect to 3d mirror symmetry, see \cite{MirSym2} and \cite{msflag}. We denote the 3d mirror dual copy of this variety by $X^{!}$ and denote the torus acting on this variety by $\bT^{!}$. While $X$ and $X^{!}$ are isomorphic as varieties, the 3d mirror symmetry relationship requires that various tori associated to $X$ and $X^{!}$ are related in non-trivial ways, see (\ref{kap}) below. Hence it will be necessary to distinguish them.

Using results from \cite{msflag}, we identify in Theorem \ref{mainthm} the power series expansion of $\chi\left(\stab^{s,X,K}_{\mathfrak{C},T^{1/2}X}(p)\right)$ around a certain point with an object we call the index vertex of $X^{!}$. We will now briefly explain the geometric meaning of the index vertex.

The index vertex arises in the enumerative geometry of quasimaps from $\mathbb{P}^1$ to $X^{!}$, see section 4 of \cite{pcmilect} and section 8.2 of \cite{NO}. For a $\bT^{!}$-fixed point $p^{!} \in X^{!}$, we denote by $\qm_{p^{!}}$ the moduli space of stable quasimaps from $\mathbb{P}^1$ to $X^{!}$ that evaluate to $p^{!}$ at $\infty$. The torus $\bT^{!}_q:=\bT^{!}\times\mathbb{C}^{\times}_q$, where $\mathbb{C}^{\times}_q$ acts on $\mathbb{P}^1$, acts on $\qm_{p^{!}}$ with a discrete fixed point set. A generic choice of the slope $s$ provides a decomposition of the virtual tangent space of $\qm_{p^{!}}$ at a $\bT^{!}_q$-fixed point into attracting and repelling directions. The index vertex $\Ind^{!s}_{p^{!}}$ is defined as the generating function that counts these repelling directions, with the sum taken over all possible degrees of quasimaps, see Definition \ref{ind} below. Then, roughly speaking, our first main theorem states:
$$
\chi\left(\stab^{s,X,K}_{\mathfrak{C},T^{1/2}X}(p)\right)= \Ind^{!s}_{p^{!}}
$$
where $p$ and $p^{!}$ are related under the bijection provided by 3d mirror symmetry as in section \ref{mirsym}.

As a consequence of this result, we deduce that the index vertex is the power series expansion of a rational function.

While $K$-theoretic stable envelopes require the slope $s$ to be generic, the index vertex can be defined for non-generic slopes $s$ as a limit of the so-called vertex function of $X^{!}$. In this case, the index vertex is still the power series expansion of a rational function, but the identification of the rational function is more complex. Using results from \cite{KS2}, we prove in Theorem \ref{mainthm2} that it gives the power series expansion of a certain rational function obtained from the stable envelopes of $X$ and stable envelopes of a subvariety $Y_s \subset X^{!}$.

The outline of this paper is as follows. In section \ref{basicnotions}, we describe some basic notions regarding the description of the cotangent bundle of the full flag variety as a Nakajima quiver variety. In section \ref{mirsym}, we describe a few pieces of data involved in the 3d mirror self-symmetry of this variety that will be important for us. In the next two sections we discuss the main objects of study in this paper: the index vertex in section \ref{indsection} and stable envelopes in \ref{stabsection}. We state and prove our main theorems in section \ref{theoremsection}. In the final section, we calculate all relevant quantities directly and verify our result for generic slopes explicitly in the simplest possible example.

We conclude this introduction with a few general remarks. Although the cotangent bundle of the full flag variety is a particularly nice variety, we expect the results of this paper to hold much more generally. More specifically, we expect that for any two varieties related by 3d mirror symmetry satisfying the conditions listed at the beginning of the introduction, the index vertex and the $K$-theoretic stable envelopes should be related in the same way as described here. Indeed, the proof of Theorem \ref{mainthm} shows that the relationship between the $K$-theoretic stable envelopes and the index vertex is really a consequence of a more general conjecture regarding elliptic stable envelopes and vertex functions of 3d mirror dual varieties, see \cite{dinkms1}, \cite{msflag}, \cite{mstoric}, \cite{liu} and the introduction of \cite{AOElliptic}. While a general construction of 3d mirror dual pairs is not presently known, the construction of Coloumb branches in \cite{coul1} and \cite{coul2} provides a large class of varieties 3d mirror dual to quiver varieties, and more generally Higgs branches of 3d $N=4$ gauge theories. In the case of type $A$, which closely resembles this paper, 3d mirror dual pairs can be realized as Cherkis bow varieties, see \cite{NakBow}, \cite{RSbows}, \cite{Cherk1}, \cite{Cherk2}, and \cite{Cherk3}.

\subsection{Acknowledgements}
This work was partially supported by NSF grant DMS-2054527.

\section{Basic properties of \texorpdfstring{$X$}{X}}\label{basicnotions}
\subsection{Description as a quiver variety}
We construct the cotangent bundle of the full flag variety as a Nakajima quiver variety. The quiver data is given in Figure \ref{fig1}. 
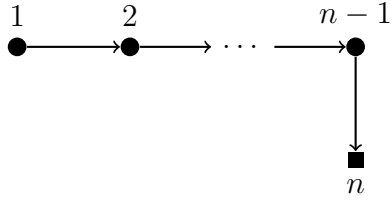
\begin{figure}[ht]
\centering
\begin{tikzpicture}[roundnode/.style={circle,fill,inner sep=2.5pt},squarednode/.style={rectangle,fill,inner sep=3pt}] 
\node[squarednode,label=below:{$n$}](F1) at (6,-1.5){};
\node[roundnode,label=above:{1}](V1) at (1.5,0){};
\node[roundnode,label=above:{2}](V2) at (3,0){};
\node(V3) at (4.5,0){\ldots};
\node[roundnode,label=above:{$n-1$}](V4) at (6,0){};
\draw[thick, ->] (V4) -- (F1);
\draw[thick, ->] (V1) -- (V2);
\draw[thick, ->] (V2) -- (V3);
\draw[thick, ->] (V3) -- (V4);
\end{tikzpicture}
\caption{The quiver data for the cotangent bundle of the flag variety.} \label{fig1}
\end{figure}

We place a vector space $V_i$ of dimension $i$ at the vertex of the quiver labeled by $i$. We denote the $n$-dimensional framing vector space by $V_n$. We choose the stability condition given by the $G:=\prod_{i=1}^{n-1} GL(V_i)$-character
$$
\theta: G \to \mathbb{C}^{\times}, \qquad (g_i)_i \mapsto \prod_{i=1}^{n-1} (\det g_i)^{-1}
$$
Let
$$
\mu: T^*\left(\prod_{i=1}^{n-1} Hom(V_i,V_{i+1})\right)\to \mathfrak{g}^{*}, \quad \mathfrak{g}=\Lie(G)
$$
be the moment map associated to the natural $G$-action. Points in the associated quiver variety 
$$
X:= \mu^{-1}(0)^{\theta-ss}/G
$$
are represented by tuples of maps
$$
A_i:V_i \to V_{i+1}, \quad B_i: V_{i+1} \to V_i, \quad \text{for } i \in \{1,\ldots,n-1\}
$$
A collection of maps describes a $\theta$-semistable if and only if the maps $A_i$ are injective. The variety $X$ is the cotangent bundle of the full flag variety. 

\subsection{Torus action} 
The action of $(\mathbb{C}^{\times})^n$ on $V_n$ induces an action of $(\mathbb{C}^{\times})^n$ on $X$. The diagonal $\mathbb{C}^{\times} \subset (\mathbb{C}^{\times})^n$ acts trivially on $X$, and we denote by $\bA$ the quotient of $(\mathbb{C}^{\times})^n$ by this subtorus. We denote the coordinates on $(\mathbb{C}^{\times})^n$ by $(u_1,u_2,\ldots,u_n)$ which means that coordinates on $\bA$ are given by $a_i=u_i/u_{i+1}$ for $i \in \{1, \ldots, n-1\}$.

An additional torus $\mathbb{C}^{\times}_{\hbar}$ acts on $X$ by scaling the cotangent data, which is given by the maps $B_i:V_{i+1} \to V_{i}$ for $i\in \{1,\ldots,n-1\}$, by $\hbar^{-1}$. We define 
$$
\bT:=\bA \times \mathbb{C}^{\times}_{\hbar}
$$ 
The torus $\bA \subset \bT$ preserves the symplectic form of $X$ and $\bT/\bA$ scales it with character $\hbar$.

The $\bT$-fixed points of $X$ are indexed naturally by permutations $I=(I_1,\ldots, I_n)$ of $n$. As remarked above, $\theta$-semistability implies that the maps $A_i:V_i \to V_{i+1}$ are injective. We identify each of these vector spaces with a subspace of $V_n=\mathbb{C}^{n}$ and denote by $e_1, \ldots, e_n$ the standard basis of $\mathbb{C}^n$. Then a fixed point indexed by $I$ corresponds to a chain of vector spaces
$$
V_1 \subset V_2 \subset \ldots \subset V_n
$$
such that $V_i/V_{i-1}=\text{Span}_{\mathbb{C}}\{e_{I_i}\}$.

\subsection{}

The vector spaces $V_i$ descend to bundles $\mathcal{V}_i$ on $X$. It is known from \cite{kirv} that the tautological line bundles $\mathscr{L}_i=\det \mathcal{V}_i$ generate $\Pic(X)$. 

We define the K\"ahler torus of $X$ by
$$
\bK= \Pic(X) \otimes_{\mathbb{Z}}\mathbb{C}^{\times}
$$
and write $\zz=(z_1,\ldots,z_{n-1})$ for the coordinates on it induced by the tautological line bundles. The variables $z_i$ are usually referred to as K\"ahler parameters.

The real Lie algebra of $\bK$ is 
$$
\Lie_{\mathbb{R}}(\bK):=\text{cochar}(\bK)\otimes_{\mathbb{Z}} \mathbb{R} = \Pic(X) \otimes_{\mathbb{Z}} \mathbb{R}
$$
We denote elements of $\Lie_{\mathbb{R}}(\bK)$ by $s$ and call them slopes. We identify $\Lie_{\mathbb{R}}(\bK)$ with $\mathbb{R}^{n-1}$ by the map
\begin{equation}\label{lierk}
\sum_{i=1}^{n-1} \mathscr{L}_i\otimes r_i \mapsto (-r_1,\ldots,-r_{n-1}) \in \mathbb{R}^{n-1}
\end{equation}
The minus signs are chosen to match later notation.

\subsection{Dual variety} 
The cotangent bundle of the full flag variety is known to be self dual with respect to 3d mirror symmetry, see \cite{MirSym2}, \cite{msflag}, and \cite{GaKor}. We denote by $X^{!}$ the same variety, constructed as a quiver variety in the same way. For $X^{!}$ we likewise have tori $\bT^{!}$, $\bA^{!}$, and $\bK^{!}$.

\subsection{Polarization and chamber}
Stable envelopes depend on a choice of polarization and chamber. We explain the meaning of these here.

A polarization of $X$ is the choice of a $K$-theory class $T^{1/2}X$ so that the tangent bundle decomposes as
$$
TX= T^{1/2}X+\hbar^{-1} (T^{1/2}X)^{\vee} \in K_{\bT}(X)
$$
We fix the polarization of $X$, given in terms of the tautological bundles $\mathcal{V}_i$, $i=1,\ldots,n$ by
\begin{equation}\label{pol}
T^{1/2}X = \sum_{i=1}^{n-1} \mathcal{V}_i^{\vee} \otimes \mathcal{V}_{i+1} -\sum_{i=1}^{n-1} \mathcal{V}_{i}^{\vee} \otimes \mathcal{V}_{i} \in K_{\bT}(X)
\end{equation}
In terms of the Chern roots $x^{(i)}_1,\ldots x^{(i)}_i$ of $\mathcal{V}_i$, this is given by
$$
T^{1/2}X = \sum_{i=1}^{n-1} \left( \sum_{j=1}^{i} \frac{1}{x^{(i)}_j}\right) \left( \sum_{k=1}^{i+1} x^{(i+1)}_k\right)  - \left( \sum_{j=1}^{i} \frac{1}{x^{(i)}_j}\right) \left( \sum_{k=1}^{i} x^{(i)}_k \right) \in K_{\bT}(X)
$$
At the fixed point given by a permutation $I$ of $\{1,2,\ldots, n\}$, the tangent space can be calculated by substituting $x^{(i)}_j = u_{I_j}$. So 
\begin{align*}
T^{1/2}_I X &= \sum_{i=1}^{n-1} \sum_{j=1}^{i} \sum_{k=1}^{i+1} \frac{u_{I_k}}{u_{I_j}}  -  \sum_{j=1}^{i} \sum_{k=1}^{i} \frac{u_{I_k}}{u_{I_j}}  \\
&= \sum_{1\leq j < k \leq n} \frac{u_{I_k}}{u_{I_j}} \in K_{\bT}(pt)
\end{align*}
and
$$
T_I X =  \sum_{1\leq j < k \leq n} \frac{u_{I_k}}{u_{I_j}} + \hbar^{-1}  \sum_{1\leq j < k \leq n} \frac{u_{I_j}}{u_{I_k}} \in K_{\bT}(pt)
$$

\subsection{Chamber}

A chamber is a choice of connected component of
\begin{equation}\label{comp}
\Lie_{\mathbb{R}}\bA -\bigcup_{w}\{\sigma \in \Lie_{\mathbb{R}}\bA \mid \langle \sigma, w \rangle=0\}
\end{equation}
where the union is taken over all $\bA$-weights of the tangent spaces at the fixed points and $\langle \cdot, \cdot \rangle$ is induced by the natural pairing on characters and cocharacters. A choice of generic cocharacter $\sigma$ of $\bA$ gives a chamber $\mathfrak{C}$, and the dependence of the chamber on the cocharacter is locally constant. The tangent space at a fixed point decomposes into a direct sum of $\bT$-weight spaces
$$
T_I X= \bigoplus_{w\in Hom(\bT,\mathbb{C}^{\times})} V_I(w)
$$
A choice of chamber given by a cocharacter $\sigma$ decomposes the tangent space at a fixed point $I$ into attracting and repelling directions:
$$
T_I X= N^{+}_I+N^{-}_I
$$
where
\begin{align*}
    N_I^{+} &= \bigoplus_{\substack{w \\ \langle \sigma, w \rangle}>0} V_I(w) \\
     N_I^{-} &= \bigoplus_{\substack{w \\ \langle \sigma, w \rangle}<0} V_I(w) 
\end{align*}
Here, $\sigma$ is viewed as a cocharacter of $\bT$ via the inclusion $\bA \subset \bT$. Since the fixed point set is finite and $\sigma$ is chosen to lie in (\ref{comp}), every direction is either attracting or repelling.

In the case of the cotangent bundle of the full flag variety, we fix once and for all the cocharacter
\begin{align}\label{chamb}
\sigma: u \mapsto (u^{-1},u^{-2},\ldots,u^{-n}), \qquad u \in \mathbb{C}^{\times}
\end{align}
and denote the corresponding chamber as $\mathfrak{C}$. With respect to this chamber, attracting weights look like $u_i/u_j$ where $i<j$, or equivalently, like monomials with positive powers in $a_i:=u_i/u_{i+1}$. Explicitly, we have
$$
T_I X= N^{+}_I+N^{-}_I
$$
where
\begin{align}\nonumber \label{normal}
    N_I^- &=  \sum_{\substack{1\leq j < k \leq n \\ I_k > I_j}} \frac{u_{I_k}}{u_{I_j}} + \hbar^{-1}\sum_{\substack{1\leq j < k \leq n \\ I_k < I_j}} \frac{u_{I_j}}{u_{I_k}} \\
    N_I^+ &=  \sum_{\substack{1\leq j < k \leq n \\ I_k < I_j}} \frac{u_{I_k}}{u_{I_j}} + \hbar^{-1}\sum_{\substack{1\leq j < k \leq n \\ I_k > I_j}} \frac{u_{I_j}}{u_{I_k}}
\end{align}

\subsection{Ordering on fixed points}
Given a permutation $I=(I_1,\ldots,I_n)$ of $n$, we define the ordered indices $i^{(k)}_1,\ldots,i^{(k)}_k$ so that
$$
\{i^{(k)}_1 < \ldots < i^{(k)}_k\}= \{I_1,\ldots,I_k\}
$$

\begin{Definition}\label{bruhat}
For permutations $I$ and $J$ with ordered indices $i^{(k)}_m$ and $j^{(k)}_m$, we define
\begin{equation}\label{bruhateq}
I\prec J \iff i^{(k)}_m<j^{(k)}_m \text{   for all   } k=1,\ldots,n-1 \text{  and  } m=1,\ldots,k
\end{equation}
The partial order $\prec$ coincides with the partial order on fixed points given by attraction in section 3.1.2 of \cite{AOElliptic}. In what follows, we will also denote by $\prec$ an arbitrary refinement of this partial order to a total order.
\end{Definition}

\section{3d mirror symmetry}\label{mirsym}
\subsection{Exchange of equivariant and K\"ahler parameters}
One property of 3d mirror symmetry, see \cite{KS2} and \cite{dinksmir}, is the existence of a bijection
$$
X^{\bT} \longleftrightarrow {X^{!}}^{\bT^{!}}
$$
and an isomorphism of tori
$$
\kappa: \bT \times \bK \times \mathbb{C}^{\times}_q \to \bT^{!} \times \bK^{!} \times \mathbb{C}^{\times}_q
$$
that induces isomorphisms $\bT \cong \bK^{!}$ and $\bK \cong \bT^{!}$. The torus $\mathbb{C}^{\times}_q$ acts on the domain of quasimaps to $X$ and $X^{!}$, as will be explained in section \ref{quasimaps} below.

Following \cite{msflag}, we define the bijection on fixed points by
$$
I \longleftrightarrow I^{-1} =:I^{!}
$$
and the map $\kappa$ by
\begin{align}\label{kap} \nonumber
    z_{i} &\mapsto \hbar^{!} a^{!}_i \\ \nonumber
    a_i &\mapsto \frac{\hbar^{!}}{q} z^{!}_i \\ \nonumber
    \hbar &\mapsto \frac{q}{\hbar^{!}} \\
    q &\mapsto q
\end{align}
The differential $d\kappa$ induces isomorphisms 
\begin{equation}\label{lielie}
\Lie_{\mathbb{R}}(\bA) \cong \Lie_{\mathbb{R}}(\bK^{!}), \qquad \Lie_{\mathbb{R}}(\bK) \cong \Lie_{\mathbb{R}}(\bA^{!})
\end{equation}
We also obtain an induced map
$$
K_{\bT\times \mathbb{C}^{\times}_q}(pt)_{loc}[[\zz]] \cong \mathbb{C}(\aa,\hbar,q)[[\zz]] \to \mathbb{C}(\aa^{!},\hbar^{!},q)[[\zz^{!}]]\cong K_{\bT^{!}\times \mathbb{C}^{\times}_q}(pt)_{loc}[[\zz^{!}]]
$$
where we use the subscript $loc$ to denote the localized $K$-theory. We will abuse notation and also denote this last map by $\kappa$.

\subsection{Walls and resonances}
The $K$-theoretic stable envelopes depend on a choice of slope $s\in \Lie_{\mathbb{R}}(\bK)$. The dependence is locally constant, and the $K$-theoretic stable envelopes change only when $s$ crosses the walls of a certain hyperplane arrangement, which we denote by $\Wall(X)$.

From (\ref{lierk}) and (\ref{lielie}) we have 
$$
s=(s_1,\ldots,s_{n-1}) \in \mathbb{R}^{n-1}\cong \Lie_{\mathbb{R}}(\bK)\cong \Lie_{\mathbb{R}}(\bA^{!}) 
$$
and we write
\begin{align*}
    \zz q^{s} &= (z_1 q^{s_1},\ldots, z_{n-1} q^{s_{n-1}}) \\
 \aa^{!} q^{s} &=(a_1^{!} q^{s_1},\ldots, a_{n-1}^{!} q^{s_{n-1}})
\end{align*}


To each element $\mathsf{w} \in \Lie_{\mathbb{R}}(\bA)$ with $\mathsf{w}=(\mathsf{w}_1,\ldots,\mathsf{w}_{n-1})$, we let $\nu_{\mathsf{w}}$ be the cyclic subgroup of $\bA$ generated by
\begin{equation}\label{resgroup}
(e^{2 \pi i w_1},\ldots, e^{2 \pi i w_{n-1}}) \in \bA
\end{equation}
We define
$$
\Res(X)=\{\mathsf{w} \in \Lie_{\mathbb{R}}(\bA) \, \mid \, X^{\nu_{\mathsf{w}}} \neq X^{\bA} \}
$$
In \cite{KS1}, it is shown that 
$$
\Res(X)=\{\mathsf{w} \, \mid \, \langle \alpha, \mathsf{w} \rangle+m=0 \, \, \text{for some} \, \, m \in \mathbb{Z},  \, \,  I\in X^{\bA}, \, \,  \alpha \in \text{char}_{\bA}(T_I X) \}
$$

Under the identification (\ref{lielie}), the walls and resonances are exchanged (\cite{KS2} Theorem 2):
$$
\Wall(X) \cong \Res(X^{!}), \quad \Res(X) \cong \Wall(X^{!})
$$
A simple calculation shows that under (\ref{lierk}), the walls are given by
$$
\Wall(X)=\{(s_1,\ldots,s_{n-1}) \, \mid \, \exists I \subset \{1,\ldots,n-1\}, \, \sum_{i \in I} s_i \in \mathbb{Z}\}
$$

We use the term generic slopes to refer to elements of $\Lie_{\mathbb{R}}(\bK)\setminus \Wall(X)$.

\section{Index vertex}\label{indsection}
In this section, we define one of the objects involved in our main theorem: the index vertex. We are interested in it for $X^{!}$.

\subsection{Quasimaps}\label{quasimaps}
Let $[x:y]$ denote homogeneous coordinates on $\mathbb{P}^1$. We denote
$$
0=[0:1], \quad \infty=[1:0]
$$
The torus $\mathbb{C}^{\times}_q$ acts on $\mathbb{P}^1$ by
\begin{equation}\label{Cq}
q \cdot [x_0:x_1] = [x_0 q:x_1], \quad q \in \mathbb{C}^{\times}
\end{equation}

For $p\in (X^{!})^{\bT^{!}}$, let $\qm_{p}$ be the moduli space of stable quasimaps from $\mathbb{P}^1$ to $X^{!}$ that take the value $p$ at $\infty$, see \cite{qm} and \cite{pcmilect} section 4. The data of a stable quasimap to $X^{!}$ provides:
\begin{itemize}
    \item Vector bundles $\mathscr{V}_i$ for $i\in \{1,\ldots,n-1\}$ over $\mathbb{P}^1$ such that $\text{rank}(\mathscr{V}_i)=i$.
    \item A trivial vector bundle $\mathscr{V}_n$ over $\mathbb{P}^1$ of rank $n$.
    \item  A section
$$
f \in H^0(\mathbb{P}^1,\mathscr{M}\oplus  (\hbar^{!})^{-1}(\mathscr{M})^{\vee}),\quad f(\infty)=p
$$
where 
$$
\mathscr{M}= \bigoplus_{i=1}^{n-1} Hom(\mathscr{V}_i,\mathscr{V}_{i+1})
$$
\end{itemize}
such that the section $f$ lands in the GIT stable locus for all but finitely many points of $\mathbb{P}^1$. We abuse notation by writing a quasimap as $f$, with the understanding that the data of the vector bundles are included.

The degree of a quasimap is given by 
$$
\deg f=(\deg \mathscr{V}_1,\ldots, \deg \mathscr{V}_{n-1})
$$
which gives a decomposition
$$
\qm_p = \bigsqcup_{\dd} \qm^{\dd}_p
$$
of the quasimap moduli space into components corresponding to quasimaps with fixed degree $\dd$. It is known that the degrees for which $\qm^d_p$ is nonempty lie inside a certain cone, see \cite{pcmilect} section 7.2.

The actions of $\bT^{!}$ on $X^{!}$ and of $\mathbb{C}^{\times}_q$ on $\mathbb{P}^1$ induce an action on quasimaps. We denote $\bT^{!}_q=\bT^{!}\times\mathbb{C}^{\times}_{q}$.

\subsection{Virtual classes on $\qm_p$}
It is known that $\qm_p$ has a perfect obstruction theory, which allows one to define the symmetrized virtual structure sheaf $\vrs$ and virtual tangent space $\mathscr{T}_{\text{vir}}$, see \cite{qm} Theorem 7.2.2 and \cite{pcmilect} section 6.

Fix $f \in \left(\qm_{p}\right)^{\bT^{!}_q}$. As a $\bT^{!}_q$-module, the virtual tangent space of $\qm_p$ at $f$ is
$$
\mathscr{T}_{\textrm{vir},f}=H^*\left(\mathbb{P}^1, \mathcal{T}^{1/2}+ \hbar^{-1}(\mathcal{T}^{1/2})^{\vee}\right)
$$
where 
\begin{equation}\label{Tbundle}
\mathcal{T}^{1/2}=\bigoplus_{i=1}^{n-1} Hom(\mathscr{V}_i,\mathscr{V}_{i+1}) - \bigoplus_{i=1}^{n-1} End(\mathscr{V}_i)
\end{equation}

The reduced virtual tangent space at $f$ is defined to be
$$
\mathscr{T}_{\textrm{vir},f}^{\textrm{red}}=\mathscr{T}_{\textrm{vir},f}-T_p X^{!}
$$
The class $\mathcal{T}^{1/2}$ induces a polarization $\mathscr{T}^{1/2}$ of the reduced virtual tangent space at a fixed quasimap $f$:
\begin{equation}\label{qmpol}
\mathscr{T}_{\mathrm{vir},f}^{\mathrm{red}}=\mathscr{T}^{1/2}+\hbar^{-1}\left(\mathscr{T}^{1/2}\right)^{\vee} \quad \text{where} \quad \mathscr{T}^{1/2}=H^*(\mathbb{P}^1, \mathcal{T}^{1/2})-T^{1/2}_{p}X^{!}
\end{equation}
\subsection{Index vertex}
Let $s\in \Lie_{\mathbb{R}}(\bA^{!})\setminus \Res(X^{!})$ and denote the substitution $a_i=q^{s_i}$ by $\aa=q^{s}$. Such an $s$ induces a decomposition 
$$
\mathscr{T}^{1/2}=\mathscr{T}^{1/2}_{s,+}+\mathscr{T}^{1/2}_{s,-}
$$
where $\mathscr{T}^{1/2}_{s,\pm}$ consists of the terms of $\mathscr{T}^{1/2}$ that, after the substitution $\aa=q^s$, have finite limit as $q^{\pm} \to \infty$.

\begin{Definition}\label{sindex}
Let $s\in \Lie_{\mathbb{R}}(\bA^{!})\setminus \Res(X^{!})$. The $s$-index of $f\in \left(\qm_p\right)^{\bT^{!}_q}$ is
$$
\mathcal{I}_{s}(f)=\text{rank}\left( \mathscr{T}^{1/2}_{s,-}\right)
$$
where we understand the rank of a virtual $\bT^{!}_q$-bundle to be counted with sign.\footnote{In other words, if $A$ and $B$ are $\bT^{!}_q$-modules, then $\text{rk}(A-B)=\text{rank}(A)-\text{rank}(B)$.}
\end{Definition}

\begin{Definition}[\cite{pcmilect} section 7.3]\label{ind}
The index vertex of $X^{!}$ at $p \in (X^{!})^{\bT^{!}}$ with respect to $s \in \Lie_{\mathbb{R}}(\bA^{!})\setminus \Res(X^{!})$ is the generating function
$$
\Ind_p^{!s}=\sum_{\substack{\dd \\ \qm^{d}_p \neq \emptyset}} \left(\zz^{!}\right)^{-\dd} \sum_{f \in (\qm^{\dd}_p)^{\bT^{!}_q}} \left( \frac{\hbar^{!}}{q}\right)^{\mathcal{I}_s(f)}
$$
where $\left(\zz^{!}\right)^{-\dd}=\prod_{i=1}^{n-1} \left(z_i^{!}\right)^{-d_i}$.
\end{Definition}
The choice of $-\dd$ in the definition of the index vertex is made for the sake of consistency with our other conventions from \cite{msflag}, which will be used below.



\subsection{Vertex function}
There is an alternative perspective on the index vertex which describes it as a limit of the so-called vertex function for $X^{!}$. Since we will need the vertex function for both $X$ and $X^{!}$, we briefly switch our variables back to those of $X$. The vertex function for $X^{!}$ is given by the same formula as below after the trivial change of variables
$$
\hbar \to \hbar^{!}, \quad u_i \to u_i^{!}, \quad \text{and} \quad z_i \to z_i^{!}
$$
\begin{Definition}\label{degrees}
We define $C\subset \mathbb{Z}\times \mathbb{Z}^2\times \ldots \times \mathbb{Z}^{n-1}$ as the collection of integers $d_{i,j}$ where $i\in \{1,\ldots,n-1\}$ and $j\in\{1,\ldots,i\}$ such that
\begin{itemize}
    \item $d_{i,j}\geq 0$ for all $i,j$.
    \item For each $i\in\{1,\ldots,n-2\}$, there exists $\{j_1,\ldots,j_i\}\subset \{1,\ldots,i+1\}$ so that $d_{i,k}\geq d_{i+1,j_k}$ for all $k$.
\end{itemize}
\end{Definition}

The vertex function of $X$ is defined by an equivariant count of quasimaps from $\mathbb{P}^1$ to $X$, see Theorem \ref{qmver} below. To streamline our presentation here, we define the vertex function through a formula.

\begin{Definition}\label{ver}
The vertex function of the cotangent bundle of the full flag variety restricted to a fixed point $I$ is given by the following power series:
\begin{multline*}
V_I(\aa,\zz) = \sum_{d_{i,j} \in C} \zz^{\dd} \prod_{i=1}^{n-2} \prod_{j=1}^i \prod_{k=1}^{i+1}\frac{\left(\hbar \frac{u_{I_k}}{u_{I_j}}\right)_{d_{i,j}-d_{i+1,k}}}{\left(q \frac{u_{I_k}}{u_{I_j}}\right)_{d_{i,j}-d_{i+1,k}}} \\ \prod_{i=1}^{n-1}  \prod_{j,k=1}^{i}\frac{\left(q \frac{u_{I_k}}{u_{I_j}}\right)_{d_{i,j}-d_{i,k}}}{\left(\hbar \frac{u_{I_k}}{u_{I_j}}\right)_{d_{i,j}-d_{i,k}}} 
\prod_{i=1}^n \prod_{j=1}^{n-1} \frac{\left(\hbar \frac{u_i}{u_{I_j}} \right)_{d_{n-1,j}}}{\left(q \frac{u_i}{u_{I_j}} \right)_{d_{n-1,j}}}
\end{multline*}
where $\zz^{\dd}=\prod_{i=1}^{n-1}\prod_{j=1}^i z_i^{d_{i,j}}$ and $(x)_d$ denotes the $q$-Pochammer symbol
$$
(x)_d:=\frac{\varphi(x)}{\varphi(x q^d)}, \qquad \varphi(x)=\prod_{i=0}^{\infty} (1-x q^i)
$$
\end{Definition}
In what follows, we will need the roof function, which is defined by
\begin{equation}\label{roof}
\hat{a}(t)=\frac{1}{t^{1/2}-t^{-1/2}}, \quad \hat{a}(t_1+t_2-t_3)=\frac{\hat{a}(t_1)\hat{a}(t_2)}{\hat{a}(t_3)}
\end{equation}

The geometric meaning of the vertex function is given by the following theorem. For precise definitions, see \cite{pcmilect} section 7.
\begin{Theorem}[\cite{KorZeit} Theorem 3.1]\label{qmver}
Let $\vrs^{\dd}$ be the symmetrized virtual structure sheaf on $\qm^{d}_I$. The vertex function $V_I(\aa,\zz)$ in Definition \ref{ver} is equal to
\begin{equation}\label{qmver2}
\frac{1}{\hat{a}(T_I X)}\sum_{\substack{\dd \\  \qm^{\dd}_{I} \neq \emptyset}} \left(\zz^{\#}\right)^{-\dd} \chi\left(\vrs^{\dd}\right) \in K_{\bT_q}(pt)_{loc}[[\zz]]
\end{equation}
where $\chi$ denotes the $K$-theoretic equivariant Euler characteristic and
$$
\left(\zz^{\#}\right)^{-\dd}= \prod_{i=1}^{n-1} \left(z_i^{\#}\right)^{-d_i} \quad \text{where} \quad  z^{\#}_i=\hbar^{a_i/2}q^{b_i/2} z_i 
$$
for some $a_i, b_i \in \mathbb{Z}$.
\end{Theorem}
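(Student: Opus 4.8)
The plan is to prove Theorem \ref{qmver} by equivariant virtual localization on $\qm^{\dd}_I$ with respect to the torus $\mathbb{C}^{\times}_q$ acting on the domain $\mathbb{P}^1$, following the general quasimap vertex formalism of \cite{pcmilect} section 7 and \cite{qm}, specialized to the $A_{n-1}$ quiver of Figure \ref{fig1}. The first step is to observe that, because $\mathbb{C}^{\times}_q$ has isolated fixed points $0,\infty\in\mathbb{P}^1$ and the value of the quasimap at $\infty$ is pinned to the fixed point $I$, the $\mathbb{C}^{\times}_q$-fixed locus of $\qm^{\dd}_I$ consists of finitely many isolated points. For such an $f$, each equivariant bundle $\mathscr{V}_i$ splits $\mathbb{C}^{\times}_q$-equivariantly as a sum of line bundles on $\mathbb{P}^1$, $\mathscr{V}_i\cong\bigoplus_{j=1}^{i}\mathcal{O}(d_{i,j})$, with equivariant weights determined near $\infty$ by the $\bT$-weights $u_{I_j}$ of the fixed point $I$, and the section data is the unique $\mathbb{C}^{\times}_q$-invariant completion. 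Thus the fixed points are labelled by tuples of nonnegative integers $d_{i,j}$, subject only to the requirement that the quasimap be generically stable, i.e.\ that each $A_i\colon\mathscr{V}_i\to\mathscr{V}_{i+1}$ be injective at the generic point of $\mathbb{P}^1$. Translating this into a condition on the summand degrees yields precisely the set $C$ of Definition \ref{degrees}, namely the existence of a matching between summands of $\mathscr{V}_i$ and $\mathscr{V}_{i+1}$ along which an equivariant map is nonzero. One must check that every element of $C$ is realized and that distinct elements give distinct fixed quasimaps.

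Second, I would compute the contribution of each fixed point $f$. Since $\vrs^{\dd}$ is the symmetrized virtual structure sheaf, the virtual localization contribution of an isolated fixed point is the roof function of the virtual tangent space, up to an explicit monomial prefactor in the equivariant and K\"ahler variables coming from the square-root twist. Using $\mathscr{T}_{\mathrm{vir},f}=H^*(\mathbb{P}^1,\mathcal{T}^{1/2}+\hbar^{-1}(\mathcal{T}^{1/2})^{\vee})$ with $\mathcal{T}^{1/2}$ as in (\ref{Tbundle}), together with the standard character of $H^*(\mathbb{P}^1,\mathcal{O}(d)\otimes w)$ for an equivariant weight $w$, one finds that each summand pair inside $Hom(\mathscr{V}_i,\mathscr{V}_{i+1})$, $End(\mathscr{V}_i)$, and the framing term $Hom(\mathscr{V}_{n-1},\mathscr{V}_n)$ produces, after applying $\hat{a}$ and the multiplicativity (\ref{roof}), a ratio of $q$-Pochhammer symbols of length $|d_{i,j}-d_{i+1,k}|$ (respectively $|d_{i,j}-d_{i,k}|$ and $d_{n-1,j}$) in the variables $\hbar\,u_{I_k}/u_{I_j}$ and $q\,u_{I_k}/u_{I_j}$. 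Multiplying over all summand pairs reproduces the three products appearing in Definition \ref{ver}.

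Third, I would assemble the pieces and match normalizations. Since $\mathscr{T}_{\mathrm{vir},f}^{\mathrm{red}}=\mathscr{T}_{\mathrm{vir},f}-T_I X$ and the subtracted $T_I X$ is $\mathbb{C}^{\times}_q$-trivial (it lives at $\infty$), the prefactor $\hat{a}(T_I X)^{-1}$ in (\ref{qmver2}) is exactly what converts $\hat{a}(\mathscr{T}_{\mathrm{vir},f})$ into $\hat{a}(\mathscr{T}_{\mathrm{vir},f}^{\mathrm{red}})$; in particular the $\dd=0$ term, the constant quasimap, contributes $\hat{a}(0)=1$, matching the leading term of $V_I$. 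The monomial prefactor produced by the square-root twist carries half-integer powers of $\hbar$ and $q$; these are absorbed precisely in the passage from $z_i$ to $z_i^{\#}=\hbar^{a_i/2}q^{b_i/2}z_i$, which is why the clean Pochhammer formula of Definition \ref{ver} is written in the variables $\zz$ while (\ref{qmver2}) uses $\zz^{\#}$. Collecting the contributions of all fixed points of all degrees then gives the stated identity in $K_{\bT_q}(pt)_{loc}[[\zz]]$.

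I expect the main obstacle to be the first step: an error-free classification of the $\mathbb{C}^{\times}_q$-fixed quasimaps, including the exact form of the stability constraints defining $C$ and the verification that there are no hidden fixed loci (for instance from quasimaps with a singular point away from $0$, or from non-split behaviour of the $\mathscr{V}_i$). The related bookkeeping hazards are the signs and half-weights in $\hat{a}$ of a virtual bundle (the rule $\hat{a}(A-B)=\hat{a}(A)/\hat{a}(B)$ combined with the $H^1$ contributions on $\mathbb{P}^1$) and the confirmation that the shift $\zz\mapsto\zz^{\#}$ is exactly the symmetrization prefactor. For the type-$A$ case at hand this is carried out in \cite{KorZeit}; alternatively, one could obtain the result by directly specializing the general vertex formula of \cite{pcmilect} section 7 and \cite{qm} to the quiver of Figure \ref{fig1}.
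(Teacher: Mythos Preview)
Your proposal is correct and is exactly the standard localization argument that underlies this theorem. The paper itself does not give a standalone proof of Theorem \ref{qmver}; it is quoted from \cite{KorZeit}. However, the localization computation you outline is recalled verbatim inside the proof of Proposition \ref{2index}: the splitting $\mathscr{V}_i=\bigoplus_j w_{i,j}q^{d_{i,j}}\mathcal{O}(d_{i,j})$ at a torus-fixed quasimap, the contribution of each line-bundle pair $\pm x q^{\pm d}\mathcal{O}(\pm d)$ to $\mathscr{T}^{\mathrm{red}}_{\mathrm{vir},f}$ (Table \ref{tab1}), the resulting Pochhammer ratios via $\hat{a}$ (Table \ref{tab2}), and the identification of the monomial $(-q^{1/2}\hbar^{-1/2})^{\pm d}$ with the shift $z_i\mapsto z_i^{\#}$. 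So your plan matches both the cited source and the paper's own treatment.

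Two small corrections. First, localizing with respect to $\mathbb{C}^{\times}_q$ alone does not give isolated fixed points: a $\mathbb{C}^{\times}_q$-fixed quasimap is constant to $I$ on $\mathbb{P}^1\setminus\{0\}$ with all defect at $0$, and this locus is positive-dimensional in general. You need the full torus $\bT_q=\bT\times\mathbb{C}^{\times}_q$ (as in the proof of Proposition \ref{2index} and in section 4.5 of \cite{Pushk1}) to force the $\mathscr{V}_i$ to split with prescribed $\bT$-weights $u_{I_j}$ and hence to isolate the fixed points. Your later description already uses these $\bT$-weights, so this is only a wording issue. Second, with the stability condition $\theta$ used here the actual line-bundle degrees are nonpositive, and it is their negatives that lie in $C$ (cf.\ the proof of Proposition \ref{2index} and the example in the final section); your sign convention is opposite to the paper's but of course equivalent.
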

\begin{Remark}
The $\hat{a}$ prefactor is equivalent to replacing the virtual tangent space by the reduced virtual tangent space in localization formulas, which causes the series to start with 1.
\end{Remark}
\begin{Remark}
The precise form of the shift $z^{\#}_i=\hbar^{a_i/2}q^{b_i/2} z_i$ is described in the proof of Proposition \ref{2index} below.
\end{Remark}

\subsection{Index limit}
In this subsection, we switch back to the variables for $X^{!}$.


\begin{Definition}\label{indlimit}
The index limit of an element of $F(\aa^{!},\zz^{!})\in K_{\bT^{!}_q}(pt)[[\zz^{!}]]$ with respect to $s\in \Lie_{\mathbb{R}}(\bA^{!})$ is defined to be 
$$
\kappa\left(\lim_{q\to 0} \kappa^{-1}\left(F(\aa^{!} , \zz^{!})\right)\big|_{\zz=\zz q^{s+1}} \right)
$$
provided this limit exists. Here, $\zz q^{s+1}=(z_1 q^{s_1+1}, \ldots, z_{n-1} q^{s_n+1})$.
\end{Definition}

\begin{Remark}
The reason we shift by $q^{s+1}$ instead of $q^s$ is for consistency with the index vertex, see Proposition \ref{2index} below.
\end{Remark}

\begin{Remark}
It may appear more natural to consider the limit
$$
\lim_{q\to 0} V^{!}_{I^{!}}(\aa^{!} q^{s},\zz^{!})
$$
However, this limit will not exist for vertex functions. For example, the vertex function of one of the fixed points of $X^{!}=T^*\mathbb{P}^1$ is
$$
V(\aa^{!},\zz^{!})=\sum_{d=0}^{\infty} \frac{(\hbar^{!})_{d} (\hbar^{!} u_1^{!}/u_2^{!} )_d}{(q)_d (q u_1^{!}/u_2^{!} )_d} {z^{!}_1}^{d}
$$
and one can easily check that the limit
$$
\lim_{q\to 0} V(\aa^{!} q^{s},\zz^{!})
$$
does not exist. On the other hand, the index limit does exist. By Proposition \ref{limver} below, this is also the case in general.
\end{Remark}
\begin{Remark}\label{rm2}
The index limit with respect to $s$ could also be defined by formally substituting $\hbar^{!}=q/t$, shifting $\aa^{!} \to \aa^{!} q^{s}$, taking the limit $q\to 0$, and substituting back $t=q/\hbar^{!}$.
\end{Remark}

\begin{Proposition}\label{limver}
The index limit of the vertex function restricted to any fixed point exists for all $s$.
\end{Proposition}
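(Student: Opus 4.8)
The plan is to work directly with the explicit formula for the vertex function $V_I(\aa,\zz)$ in Definition \ref{ver}, apply the change of variables dictated by the index limit (Remark \ref{rm2}), and show that the resulting $q\to 0$ limit is well-defined term by term in $\zz$. By Remark \ref{rm2}, the index limit amounts to substituting $\hbar=q/t$, then $\aa\to\aa q^{s}$, then letting $q\to 0$, and finally substituting back $t=q/\hbar$; since each power $\zz^{\dd}$ appears with a coefficient that is a finite product of ratios of $q$-Pochhammer symbols, it suffices to control the $q\to 0$ behavior of a single such coefficient for fixed $\dd$ and then check that the surviving expression is a well-defined element of $K_{\bT^{!}_q}(pt)[[\zz^{!}]]$ (i.e. that no negative powers of $q$ survive after the back-substitution $t = q/\hbar$).

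The key computational input is the elementary limit behavior of the $q$-Pochhammer symbol. First I would record that for a monomial $m = m(\aa,\hbar,q)$ and integer $d$,
\begin{equation*}
\lim_{q\to 0} (m q^{c})_{d}
\end{equation*}
exists and equals $1$ when $c>0$, equals $(1-m)(1-mq)\cdots$ truncated appropriately when $c=0$ (i.e. $\prod_{i=0}^{d-1}(1-mq^{i})\big|_{q\to 0}$, which for $d\ge 1$ is $1-m$ if we keep only the leading factor, or more carefully $\prod_{i=0}^{d-1}(1-m\cdot 0^{?})$ — one must track which factors have a positive power of $q$), and behaves like $(-m)^{d}q^{cd}q^{d(d-1)/2}$ up to a unit when $c<0$. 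After the substitution $\aa\to\aa q^{s}$ with $s$ off the resonance arrangement, every monomial $\tfrac{u_{I_k}}{u_{I_j}}$ (and its $\hbar$-twisted cousins) acquires a definite nonzero power of $q$, so one is always in the first or third regime, never in a borderline case; this is exactly where the hypothesis $s\notin\Res(X^{!})$ — or rather genericity — would be used, though I would check whether genericity is actually needed or whether the limit exists for all $s$ as claimed (the statement says "for all $s$", so presumably the potentially-vanishing factors in the $c=0$ case still give a finite limit, just possibly $0$, which is fine). Substituting $\hbar=q/t$ converts the $\hbar$-dependence into $q$-dependence, shifting these powers; I would tabulate, for the three families of Pochhammer factors in Definition \ref{ver} (the $A$-maps, the $\mathrm{End}$ terms, the framing terms), the total $q$-exponent produced and verify it is nonnegative after the $t=q/\hbar$ back-substitution.

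The cleanest way to organize this is via the polarization $\mathscr{T}^{1/2}$: each Pochhammer ratio in $V_I$ corresponds to a summand of $\mathcal{T}^{1/2}$ (compare \eqref{Tbundle}), and the $q$-exponent of its $q\to 0$ leading term is governed by whether the corresponding character lies in $\mathscr{T}^{1/2}_{s,+}$ or $\mathscr{T}^{1/2}_{s,-}$ — the same decomposition used to define the $s$-index in Definition \ref{sindex}. So the existence of the limit is essentially equivalent to the finiteness of $\mathcal{I}_s(f)$, which holds because each $\qm^{\dd}_p$ with $\dd$ fixed is finite-dimensional and the polarization splits cleanly off the resonances. Concretely, I would show: (i) for each $\dd$, only finitely many of the Pochhammer factors differ from $1$ in the limit; (ii) the nonvanishing ones contribute a monomial in $\aa^{!},\hbar^{!}$ times a nonnegative power of $q$; hence (iii) the coefficient of $\zz^{\dd}$ converges to an element of $\mathbb{C}(\aa^{!})[[\hbar^{!}]]$ or $\mathbb{C}(\aa^{!},\hbar^{!})$, and reassembling over $\dd$ gives a well-defined power series in $\zz^{!}$.

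The main obstacle I anticipate is bookkeeping rather than conceptual: correctly tracking the half-integer and integer shifts $z^{\#}_i = \hbar^{a_i/2}q^{b_i/2}z_i$ (which enter once one relates $V_I$ to the geometric count in Theorem \ref{qmver}) together with the $q^{s+1}$ shift in Definition \ref{indlimit}, and making sure the combined shift is precisely the one under which the $q$-exponents come out nonnegative. I would isolate this in a lemma computing the leading $q$-order of $\zz^{\dd}$-coefficient of $V_I(\aa q^{s+\cdots},\zz)$ after $\hbar = q/t$, and defer the exact determination of $a_i,b_i$ to the proof of Proposition \ref{2index} as the paper already signals. A secondary subtlety is confirming that the limit, a priori an element of $\mathbb{C}(\aa^{!},\hbar^{!})[[\zz^{!}]]$, actually lies in $K_{\bT^{!}_q}(pt)[[\zz^{!}]]$ as one expects — i.e. that the $q$-dependence that reappears via $t = q/\hbar^{!}$ is polynomial, not rational, in $q$; this follows from (ii) above since all surviving $q$-powers are nonnegative integers.
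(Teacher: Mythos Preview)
Your core approach is the same as the paper's, but you have made it far more elaborate than necessary and introduced an incorrect side-claim along the way. The paper's proof is two lines: every coefficient of $V^{!}_{I^{!}}$ is a finite product of factors of the form $\bigl((\hbar^{!}x)_d/(qx)_d\bigr)^{\pm 1}$ with $x$ a character of $\bA^{!}$, and by Remark~\ref{rm2} (i.e.\ $\hbar^{!}=q/t$) each such ratio has a well-defined $q\to 0$ limit. The point you are circling but not stating cleanly is that one should treat the \emph{ratio}, not the individual Pochhammers: after $\hbar^{!}=q/t$ and $x\mapsto xq^{c}$ the ratio is $\prod_{i}(1-xq^{c+1+i}/t)/(1-xq^{c+1+i})$, and factor by factor the numerator and denominator carry the identical power of $q$, so the quotient is manifestly finite as $q\to 0$. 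No tabulation over the three families, no tracking of $z^{\#}$ shifts, and no appeal to the polarization $\mathscr{T}^{1/2}$ is required; that material belongs to Proposition~\ref{2index}, which identifies the limit, not to this proposition, which only asserts existence.

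Your ``secondary subtlety'' is both unnecessary and wrong. After the back-substitution $t=q/\hbar^{!}$ the result is \emph{not} polynomial in $q$: the index vertex of Definition~\ref{ind} carries factors $(\hbar^{!}/q)^{\mathcal{I}_s(f)}$, so negative powers of $q$ do appear. This causes no problem, because the $q\to 0$ limit is taken \emph{before} back-substitution, in the variable $t$, and the target space allows Laurent monomials in $q$. If you attempt to ``verify the total $q$-exponent is nonnegative after the $t=q/\hbar$ back-substitution'' you will find it is not, and your step (ii) will fail as stated; but this failure is irrelevant to the existence claim.
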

\begin{proof}
The coefficients of the vertex function consist of terms of the form
$$
\left(\frac{(\hbar^{!} x)_d}{(q x)_d}\right)^{\pm1}
$$
where $x$ is a character of $\bA^{!}$ and $d\in \mathbb{Z}$. Using Remark \ref{rm2}, one can see that the limit of all such terms exists.
\end{proof}


\begin{Proposition}\label{2index}
For $s\in \Lie_{\mathbb{R}}(\bA^{!})\setminus \Res(X^{!})$, the index limit with respect to $s$ of the vertex function of $X^{!}$ restricted to $I^{!}$ is equal to the index vertex at $I^{!}$:
$$
\kappa\left( \lim_{q\to 0} \kappa^{-1}\left(V^{!}_{I^{!}}(\aa^{!},\zz^{!}) \right)\big|_{\zz \to \zz q^{s+1}} \right)= \Ind^{!s}_{I^{!}}
$$
\end{Proposition}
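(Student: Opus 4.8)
\emph{Proof strategy.} The plan is to prove the identity degree by degree in $\dd$, and within each degree, contribution by contribution over the $\bT^{!}_q$-fixed quasimaps of that degree. First I would invoke Theorem~\ref{qmver} for $X^{!}$ to write $V^{!}_{I^{!}}(\aa^{!},\zz^{!})=\hat a(T_{I^{!}}X^{!})^{-1}\sum_{\dd}(\zz^{!,\#})^{-\dd}\,\chi(\vrs^{\dd})$, and then apply virtual localization: since the $\bT^{!}_q$-fixed locus of $\qm^{\dd}_{I^{!}}$ is discrete, $\chi(\vrs^{\dd})$ is a sum over $f\in(\qm^{\dd}_{I^{!}})^{\bT^{!}_q}$ of the $\hat a$-genus of the (reduced) virtual tangent space, so that the contribution of a fixed quasimap $f$ to $V^{!}_{I^{!}}$ is $(\zz^{!,\#})^{-\dd}\,\hat a(\mathscr{T}^{1/2}_f)\,\hat a\big((\hbar^{!})^{-1}(\mathscr{T}^{1/2}_f)^{\vee}\big)$ up to an overall sign. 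Comparing this, term by term, with the explicit formula of Definition~\ref{ver}, I would identify the $\bT^{!}_q$-fixed quasimaps of degree $\dd$ with the tuples $\{d_{i,j}\}$ of Definition~\ref{degrees} having prescribed partial sums, and simultaneously read off the exponents $a_i,b_i$ appearing in $z^{\#}_i=\hbar^{a_i/2}q^{b_i/2}z_i$ --- this is the promise made in the remark following Theorem~\ref{qmver}.

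Next I would compute the index limit of a single contribution. Using Remark~\ref{rm2}, the operation $\kappa\big(\lim_{q\to0}\kappa^{-1}(\,\cdot\,)|_{\zz\to\zz q^{s+1}}\big)$ is the same as substituting $\hbar^{!}=q/t$, rescaling $\aa^{!}\to\aa^{!}q^{s}$, taking $q\to0$, and substituting $t=q/\hbar^{!}$ back; I would first check that the $\zz$-shift by $q^{s+1}$ combined with $\kappa$ indeed reproduces this rescaling and the correct powers of $\zz^{!}$. Writing the $\mathbb{C}^{\times}_q$-weight decomposition $\mathscr{T}^{1/2}_f=\sum_a q^{m_a}\alpha_a-\sum_b q^{m'_b}\beta_b$ with $\alpha_a,\beta_b$ characters of $\bT^{!}$, after the rescaling each weight acquires a $q$-exponent $e=m+\langle s,\cdot\rangle$, and $s\notin\Res(X^{!})$ forces $e\neq 0$ for every weight; this is exactly the dichotomy between $\mathscr{T}^{1/2}_{s,+}$ and $\mathscr{T}^{1/2}_{s,-}$ in Definition~\ref{sindex}. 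From $\hat a(x)\sim -x^{1/2}$ as $x\to0$ and $\hat a(x)\sim x^{-1/2}$ as $x\to\infty$ I would extract the leading $q\to 0$ behavior of each factor $\hat a(q^{e}\alpha)\,\hat a((\hbar^{!})^{-1}q^{-e}\alpha^{-1})$; taking the product, the $q$-powers coming from the weights counted positively and negatively cancel so that a finite nonzero limit survives (this is what Proposition~\ref{limver} secures), and the surviving power of $\hbar^{!}/q$ equals $\operatorname{rk}\mathscr{T}^{1/2}_{s,-}=\mathcal{I}_s(f)$ plus a correction linear in $\dd$. That correction is precisely cancelled by the half-integral shift $\zz^{\#}$ determined in the first step, leaving $(\zz^{!})^{-\dd}(\hbar^{!}/q)^{\mathcal{I}_s(f)}$; summing over $f$ and $\dd$ and comparing with Definition~\ref{ind} finishes the proof.

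I expect the main obstacle to be the second step: tracking all the half-integral powers of $\hbar^{!}$ and $q$, together with signs, through $\kappa^{-1}$, the $q^{s+1}$-shift, the limit, and $\kappa$, and verifying that they reorganize exactly into the monomial $(\zz^{!})^{-\dd}(\hbar^{!}/q)^{\mathcal{I}_s(f)}$; it is here that the precise form of $z^{\#}_i$ must be nailed down, and indeed the proof doubles as the definition of $a_i,b_i$. A second, related subtlety is that every individual $\hat a$-factor tends to $0$ or $\infty$ as $q\to 0$, so the limit only exists after combining numerator and denominator contributions, and one must also verify that the combinatorial sum over $\{d_{i,j}\}$ matches the geometric sum over $\bT^{!}_q$-fixed quasimaps of the given degree, which relies on the description of the effective cone in Definition~\ref{degrees}.
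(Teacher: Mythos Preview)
Your proposal is correct and follows essentially the same route as the paper: expand the vertex function of $X^{!}$ by $\bT^{!}_q$-localization via Theorem~\ref{qmver}, decompose the polarization $\mathscr{T}^{1/2}$ of the reduced virtual tangent space at each fixed quasimap into $\bT^{!}_q$-weights, compute the contribution of each weight-pair to the localization formula, absorb the resulting monomial factors $(-q^{1/2}(\hbar^{!})^{-1/2})^{\pm d}$ into the shift $z_i\mapsto z_i^{\#}$, and then apply the limiting procedure of Remark~\ref{rm2} to see that only the weights in $\mathscr{T}^{1/2}_{s,-}$ survive, each contributing $(\hbar^{!}/q)^{\pm 1}$. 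The paper organizes the weight-by-weight computation into two tables rather than invoking the asymptotics of $\hat a$ directly, but this is the same calculation; your anticipated difficulties with half-integral powers and the matching of fixed quasimaps to the tuples of Definition~\ref{degrees} are exactly the bookkeeping the paper outsources to those tables and to the cited reference~\cite{Pushk1}.
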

\begin{proof}

\begingroup
\renewcommand*{\arraystretch}{2}
\begin{table}[b]
\centering
\begin{tabular}{|c|c|}
\hline
       Virtual sub-bundle of $\mathcal{T}^{1/2}$  & Contribution to $\mathscr{T}^{\text{red}}_{\text{vir},f}$  \\ \hline
      $ \pm x q^{d} \mathcal{O}(d) \pm (\hbar^{!})^{-1}x^{-1} q^{-d}\mathcal{O}(-d)$   & $\pm x q \left(1+q+\ldots + q^{d-1}\right) \mp \dfrac{1}{x \hbar^{!}}\left( 1 + q^{-1} + \ldots q^{-d+1} \right)$ \\ \hline 
     $\pm x q^{-d} \mathcal{O}(-d)\pm (\hbar^{!})^{-1} x^{-1} q^{d} \mathcal{O}(d)$ & $\pm \dfrac{q }{x \hbar^{!}}\left(1+q+\ldots + q^{d-1}\right) \mp x\left( 1 + q^{-1} + \ldots q^{-d+1} \right)$ \\ \hline
\end{tabular}
\caption{Contributions of $\mathcal{T}^{1/2}$ to $\mathscr{T}^{\text{red}}_{\text{vir},f}$.}\label{tab1}
\end{table}
\endgroup

Up to normalization, the vertex function of $X^{!}$ is equal to the generating function of the equivariant Euler characteristic of $\qm^{\dd}_{I^{!}}$ as in Theorem \ref{qmver}. We will compute this here using equivariant localization.

At a fixed point $f\in \left(\qm^{\dd}_{I^{!}}\right)^{\bT^{!}_q}$, the vector bundles $\mathscr{V}_i$ over $\mathbb{P}^1$ split into the sum of equivariant line bundles: 
$$
\mathscr{V}_i=\bigoplus_{j=1}^i w_{i,j} q^{d_{i,j}}\mathcal{O}(d_{i,j})
$$
where $w_{i,j}$ stands for a character of $\bA^{!}$, see section 4.5 of \cite{Pushk1}. The injectivity of the maps $V_{i-1}\to V_{i}$ constrains the posible degrees of the bundles $\mathscr{V}_i$. In particular, the collection $\{-d_{i,j}\}$ of negatives of the degrees must lie in $C$. Hence we have
$$
\mathcal{T}^{1/2} = \bigoplus_{i} x_{i}q^{a_i} \mathcal{O}(a_i)-\bigoplus_{j} y_{j}q^{b_j} \mathcal{O}(b_j)
$$
where $x_i$ and $y_j$ are characters of $\bA^{!}$, $a_i$ and $b_j$ are integers, and the sums are taken over some indexing sets.

Each of the possible virtual sub-bundles $\pm x q^{\pm d} \mathcal{O}(\pm d)$ of $\mathcal{T}^{1/2}$ comes with a pair $\pm (\hbar^{!})^{-1} x^{-1} q^{\mp d} \mathcal{O}(\mp d)$ which together contribute to the reduced virtual tangent space as in Table \ref{tab1} (see \cite{Pushk1} Lemma 1). We assume in Table \ref{tab1} without loss of generality that $d\geq 0$.

By definition of the symmetrized virtual structure sheaf, these terms contribute to localization formula via the roof function (\ref{roof}). The contributions of each of these terms is given in Table \ref{tab2}.

\begingroup
\renewcommand*{\arraystretch}{2}
\begin{table}[h]
\centering
\begin{tabular}{|c|c|} \hline
       Virtual sub-bundle of $\mathcal{T}^{1/2}$  & Contribution to (\ref{qmver2}) \\ \hline 
      $ \pm x q^{d} \mathcal{O}(d) \pm (\hbar^{!})^{-1}x^{-1} q^{-d}\mathcal{O}(-d)$   & $\left((-q^{1/2} (\hbar^{!})^{-1/2})^d \dfrac{(\hbar^{!} x)_d}{(q x)_d}\right)^{\pm 1}$ \\  \hline 
     $\pm x q^{-d} \mathcal{O}(-d)\pm (\hbar^{!})^{-1} x^{-1} q^{d} \mathcal{O}(d)$ & $\left((-q^{1/2}(\hbar^{!})^{-1/2})^{-d}\dfrac{(\hbar^{!} x)_{-d}}{(q x)_{-d}}\right)^{\pm 1}$ \\ \hline
\end{tabular}
\caption{Contributions in the localization formula.}\label{tab2}
\end{table}
\endgroup


The terms $(-q^{1/2}\hbar^{-1/2})^{\pm d}$ account for precisely the difference between $z_i$ and $z^{\#}_i$ in Theorem \ref{qmver}.

Applying Remark \ref{rm2}, we see that the contributions to the index limit arise from monomials in 
$$
 \mathscr{T}^{1/2}\big|_{\aa^{!}= q^{s}}
$$
that tend to $0$ as $q\to \infty$. Each such monomial contributes a power of $\left(\hbar^{!}/q\right)^{\pm 1}$, with the sign determined by the sign of the monomial. Putting all this together, we see that the index limit of the vertex function is equal to the index vertex.
\end{proof}

We can now extend the definition of the index vertex to include non-generic slopes.

\begin{Definition}
The index vertex of $X^{!}$ at $I^{!}$ with respect to $s\in \Lie_{\mathbb{R}}(\bA^{!})$ is defined to be the index limit of $V^{!}_{I^{!}}(\aa^{!},\zz^{!})$ with respect to $s$.
\end{Definition}
Thanks to Proposition \ref{2index}, this definition agrees with Definition \ref{ind} for $s \in \Lie_{\mathbb{R}}(\bA^{!})\setminus \Res(X^{!})$ and extends it for $s \in \Res(X^{!})$.

\section{Stable envelopes}\label{stabsection}

In this section, we explain our conventions used for stable envelopes. The main references for stable envelopes are \cite{AOElliptic} and section 9 of \cite{pcmilect}. 

\subsection{Notations}
Given a $\bT$-module expressed in weights as $V=w_1+\ldots + w_r \in K_{\bT}(pt)$ with $w_i\neq 1$ for all $i$, we define the symmetric and exterior powers as
$$
\sympow(V):= \prod_{i=1}^{r} (1-w_i)^{-1}, \quad \extpow V :=\prod_{i=1}^{r} (1-w_i)
$$
We extend these to all of $K_{\bT}(pt)$ by
$$
\sympow(-V):= \extpow V, \quad \extpow (-V):=\sympow(V)
$$
Similarly, we define
$$
\Theta(V)= \prod_{i=1}^{r} \vartheta(w_i) \quad \text{and} \quad \Phi(V)= \prod_{i=1}^{r} \varphi(w_i)
$$
where
$$
\varphi(x)=\prod_{i=0}^{\infty} (1-x q^i) \quad \text{and} \quad \vartheta(x)=(x^{1/2}-x^{-1/2}) \varphi(q x) \varphi(q/x)
$$
We similarly extend these to $K_{\bT}(pt)$ by multiplicativity.

\subsection{Stable envelopes}
Stable envelopes depend on a choice of polarization and chamber. In all that follows, we assume that these are given by (\ref{pol}) and (\ref{chamb}).

For $I\in X^{\bA}$, let $\stab^{X,Ell}_{-\mathfrak{C}}(I)$ be the elliptic stable envelopes of $I$ for $X$ corresponding to the polarization $T^{1/2}X$ and chamber $-\mathfrak{C}$. 

The elliptic stable envelope of a fixed point gives a section of a line bundle over the extended elliptic cohomology scheme of $X$. This scheme can be described as
$$
\text{Ell}_{\bT}(X)=\left(\bigsqcup_{I \in X^{\bT}} \widehat{O}_I\right)/\Delta
$$
where $\widehat{O}_{I}$ is a product of elliptic curves isomorphic to $\mathbb{C}^{\times}/q^{\mathbb{Z}}$ for fixed $q$ with $|q|<1$ and $\Delta$ denotes a certain gluing of these abelian varieties, see \cite{SmirnovElliptic} section 2.13. Restricting this section to a component $\widehat{O}_J$, one obtains the matrix of restrictions of the elliptic stable envelope:
$$
T^X_{I,J}:=\stab^{X,Ell}_{-\mathfrak{C}}(I)\big|_{\widehat{O}_J}
$$
We use the normalization of the elliptic stable envelope determined by
$$
T^{X}_{I,I} = \Theta(N_I^{+})
$$
which differs from the normalization of \cite{AOElliptic} by a sign.
We assume the fixed points are ordered from highest to lowest with respect to $\prec$ from Definition \ref{bruhat}, which means that the matrix of restrictions is upper triangular. Explicit formulas for the elliptic stable envelopes in terms of the theta function $\vartheta$ can be written using the so-called elliptic weight functions, see \cite{RTV} and section 3 of \cite{msflag}. More generally, explicit formulas for the elliptic stable envelopes of any type $A$ quiver variety were written in \cite{dinkinselliptic}.

We also define a normalized matrix of restrictions of the elliptic stable envelope by
$$
\widetilde{T}^{X}_{I,J}= \frac{\stab^{X, Ell}_{-\mathfrak{C}}(I)\big|_{\widehat{O}_J}}{\stab^{X,Ell}_{-\mathfrak{C}}(J)\big|_{\widehat{O}_J}}
$$

Similarly, we have $K$-theoretic stable envelopes $\stab^{s,X,K}_{-\mathfrak{C}}(I)$ that depend further on a choice of slope $s\in \Lie_{\mathbb{R}}(\bK)$, see \cite{pcmilect} section 9 and \cite{OS} section 2. We normalize them by requiring that the diagonal terms of the matrix of fixed point restrictions are given by
$$
\stab^{s,X,K}_{-\mathfrak{C}}(I)\big|_{I} = \sqrt{\frac{\det N_{I}^{+}}{\det T^{1/2}_I X}} \extpow (N_{I}^{+})^{\vee}
$$
We denote the matrix of restrictions of the $K$-theoretic stable envelopes by 
$$
A^{s,X}_{I,J}= \stab^{s,X,K}_{-\mathfrak{C}}(I)\big|_J
$$
and a renormalized restriction matrix by 
$$
\widetilde{A}^{s,X}_{I,J}= \frac{\stab^{s,X,K}_{-\mathfrak{C}}(I)\big|_J}{\stab^{s,X,K}_{-\mathfrak{C}}(J)\big|_J}
$$

\section{Index vertex and stable envelopes}\label{theoremsection}

This section contains our main theorems, which relate the index vertex of $X^{!}$ to the $K$-theoretic stable envelopes of $X$.

\subsection{Big enough slopes}
In what follows, we will also be interested in the index limits of $\kappa\left(V_{I}(\aa,\zz)\right)$ and $\Phi((q-\hbar^{!})N_{I^{!}}^{!+})$ in the sense of Definition \ref{indlimit}. It will be convenient to have a notion of slopes for which these limits are trivial.

\begin{Definition}\label{ample}
A slope $s\in \Lie_{\mathbb{R}}(\bA^{!})$ is said to be big enough if $s_i>0$ for all $i$ and
$$
s_i+s_{i+1} + \ldots + s_{j} > j-i-1
$$
for all $j\geq i$.
\end{Definition}

\begin{Lemma}\label{ample2}
If $s\in \Lie_{\mathbb{R}}(\bA^{!})$ big enough, then
$$
\lim_{q\to 0} V_{I}(\aa,\zz q^{s+1})=1
$$
and
$$
\kappa\left( \lim_{q\to0} \kappa^{-1}\left(\Phi((q-\hbar^{!}) N_{I^{!}}^{!+})\right)\big|_{\zz\to\zz q^{s+1}} \right)=1
$$
for all permutations $I$.
\end{Lemma}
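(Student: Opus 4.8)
The plan is to handle the two identities separately; each comes down to tracking powers of $q$. \emph{For the first identity}, I would work from the series of Definition \ref{ver}, writing $V_I(\aa,\zz)=\sum_{\dd\in C}\zz^{\dd}\,c_{\dd}(\aa,\hbar,q)$ with $\zz^{\dd}=\prod_{i=1}^{n-1}z_i^{D_i}$, $D_i:=\sum_{j=1}^i d_{i,j}$, and $c_{\dd}$ the displayed product of ratios of $q$-Pochhammer symbols. After $\zz\mapsto\zz q^{s+1}$ the degree-$\dd$ term becomes $\zz^{\dd}\,q^{\sum_i(s_i+1)D_i}\,c_{\dd}$, and the $\dd=0$ term is $1$, so it is enough to prove $\mathrm{ord}_q(c_{\dd})+\sum_{i=1}^{n-1}(s_i+1)D_i>0$ for all $\dd\in C\setminus\{0\}$. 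A direct computation with $q$-Pochhammer symbols gives $\mathrm{ord}_q\!\big((\hbar x)_e/(q x)_e\big)=\max(0,-e)$ for any monomial $x$ in $\aa$ and any $e\in\mathbb{Z}$; applied to the three products of Definition \ref{ver} (the third contributes $0$ since $d_{n-1,j}\ge0$) this yields
$$
\mathrm{ord}_q(c_{\dd})=\sum_{i=1}^{n-2}\sum_{j=1}^i\sum_{k=1}^{i+1}\max\!\big(0,\,d_{i+1,k}-d_{i,j}\big)-\sum_{i=1}^{n-1}\sum_{1\le j<k\le i}\big|d_{i,j}-d_{i,k}\big|.
$$

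What remains of the first identity is purely combinatorial, and this is the step I expect to be the main obstacle. The key input is the interlacing description of $C$: sorting the degrees at each vertex as $d_{i,(1)}\ge\cdots\ge d_{i,(i)}\ge0$, Hall's theorem applied to the matching in Definition \ref{degrees} shows $\dd\in C$ iff $d_{i,(a)}\ge d_{i+1,(a+1)}$ for all $i,a$. With $\Delta_i:=\sum_{1\le j<k\le i}|d_{i,j}-d_{i,k}|$ and $P_i:=\sum_{j=1}^i\sum_{k=1}^{i+1}\max(0,d_{i+1,k}-d_{i,j})$, so that $\mathrm{ord}_q(c_{\dd})=\sum_{i=1}^{n-2}P_i-\sum_{i=1}^{n-1}\Delta_i$ and $\Delta_1=0$, the bound reduces to the term-by-term estimate $\Delta_{i+1}\le P_i+D_i$, which should follow from an elementary argument about sorted nonnegative sequences obeying the interlacing constraint. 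Granting this, $\mathrm{ord}_q(c_{\dd})\ge-\sum_{i=1}^{n-2}D_i$, whence $\mathrm{ord}_q(c_{\dd})+\sum_i(s_i+1)D_i\ge\sum_{i=1}^{n-2}s_iD_i+(s_{n-1}+1)D_{n-1}>0$ because $s_i>0$ and $\dd\ne0$; the big-enough hypothesis is used here only through the inequalities $s_i>0$.

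\emph{For the second identity}, I would decompose $N^{!+}_{I^{!}}$ into its $\bT^{!}$-weights, so that $\Phi\big((q-\hbar^{!})N^{!+}_{I^{!}}\big)=\prod_w\varphi(qw)/\varphi(\hbar^{!}w)$. By (\ref{normal}) each weight $w$ of $N^{!+}_{I^{!}}$ is either a product of positive powers of the $a^{!}_i$ or $(\hbar^{!})^{-1}$ times such a product; applying $\kappa^{-1}$ via (\ref{kap}) and then substituting $\zz\mapsto\zz q^{s+1}$, a short computation shows that $q\,\kappa^{-1}(w)$ and $\kappa^{-1}(\hbar^{!}w)$ both become a monomial in $\hbar$ and $\zz$ times $q^{a}$ with $a>0$, positivity of $a$ using only $s_i>0$. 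Since $\varphi(c\,q^{a})\to1$ as $q\to0$ for any constant $c$ and any $a>0$, each factor $\varphi(q\kappa^{-1}(w))/\varphi(\hbar^{!}\kappa^{-1}(w))$ tends to $1$, so the product tends to $1$, and applying $\kappa$ to the constant $1$ gives $1$.
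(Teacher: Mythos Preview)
The paper's own proof is a single line (``follows from Definition~\ref{ver} and (\ref{normal}) by a straightforward computation''), so you are supplying the details rather than comparing strategies. Your handling of the second identity is correct: decomposing $N^{!+}_{I^!}$ into $\bT^!$-weights via (\ref{normal}), applying $\kappa^{-1}$ using (\ref{kap}), and verifying that after $\zz\to\zz q^{s+1}$ every argument of $\varphi$ becomes a monomial in $\hbar,\zz$ times a positive power of $q$ is exactly the right computation, and indeed only uses $s_i>0$.

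For the first identity there is a genuine gap at precisely the step you flag as ``the main obstacle'': the term-by-term inequality $\Delta_{i+1}\le P_i+D_i$ is \emph{false}. Take $i=3$ with sorted level-$3$ degrees $(2,2,2)$ and sorted level-$4$ degrees $(3,2,1,0)$. The interlacing $a_l\ge b_{l+1}$ holds ($2\ge2$, $2\ge1$, $2\ge0$), but
\[
\Delta_4=\sum_{1\le j<k\le 4}(b_j-b_k)=10,\qquad P_3=3\sum_{k}\max(0,b_k-2)=3,\qquad D_3=6,
\]
so $\Delta_4=10>9=P_3+D_3$. This configuration embeds into a bona fide $\dd\in C$ (for $n=5$ take all lower levels identically equal to $2$), so the term-by-term route cannot be repaired locally. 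A related warning sign is that your argument invokes only $s_i>0$, never the extra inequalities $\sum_{l=i}^{j}s_l>j-i-1$ of Definition~\ref{ample}; either the paper's hypothesis is stronger than necessary, or your target bound $\mathrm{ord}_q(c_{\dd})\ge -\sum_{i\le n-2}D_i$ is itself too optimistic and the discrepancy is hiding the error. You will need a different organization of the combinatorics---one that couples the estimate across consecutive levels rather than bounding each $\Delta_{i+1}$ against $P_i+D_i$ alone, and quite possibly one that actually uses the full ``big enough'' condition---to close this half of the lemma.
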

\begin{proof}
The follows from Definition \ref{ver} and (\ref{normal}) by a straightforward computation.
\end{proof}

\subsection{Generic slopes}
Let $\mathsf{D}$ be the diagonal matrix given by 
\begin{equation}\label{Dmat}
\mathsf{D}=\text{diag}\left(\sqrt{\frac{\det T_I^{1/2}}{\det N_I^+}}\right)_{I\in X^{\bT}}
\end{equation}
Let $\sympow\left( \hbar \otimes N^+ \right)$ be the column vector
$$
\sympow\left( \hbar \otimes N^+ \right)=\left( \sympow\left( \hbar \otimes N_I^+ \right)\right)_{I \in X^{\bT}}
$$
and $\Ind^{!s}$ be the column vector
$$
\Ind^{!s}=\left(\Ind^{!s}_{I^{!}}\right)_{I \in X^{\bT}}
$$
of index vertices for $X^{!}$. We remind the reader that $I^{!}$ is the inverse permutation of $I$. 


Our main theorem describes the index vertex of $X^{!}$ for generic slopes in terms of the $K$-theoretic stable envelopes of $X$.
\begin{Theorem}\label{mainthm}
If $s \in \Lie_{\mathbb{R}}(\bK)\setminus \Wall(X)$ is big enough, then
$$
\Ind^{!s} = \kappa\left( \mathsf{D} \cdot \widetilde{A}^{s+1,X} \cdot \mathsf{D}^{-1} \cdot \sympow\left(\hbar \otimes N^+\right) \right)
$$
Equivalently,
$$
\Ind^{!s}_{I^{!}}=\kappa\left( \sum_{J \in X^{\bT}} \sqrt{\frac{\det T^{1/2}_I X}{ \det N_I^+}}  \widetilde{A}^{s+1,X}_{I,J} \sqrt{\frac{ \det N_J^{+}}{\det T_J^{1/2}X}}\sympow\left( \hbar \otimes N_J^{+} \right) \right)
$$
\end{Theorem}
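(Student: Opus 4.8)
The plan is to reduce the statement to known facts relating the vertex function of $X^{!}$ to the elliptic stable envelopes of $X$ via 3d mirror symmetry, and then to perform the index limit on both sides. First I would recall from \cite{msflag} the 3d mirror symmetry statement for $X = T^*\mathsf{Fl}_n$: the vertex function $V^{!}_{I^{!}}(\aa^{!},\zz^{!})$, suitably normalized by a universal factor (a product of $\Phi$'s built from $T^{1/2}$, capturing the difference between the symmetrized and naive virtual structure sheaves), is obtained by pairing the elliptic stable envelopes of $X$ with those of $X^{!}$ over the fixed points, using $\kappa$ to identify the relevant parameters. Concretely this takes the form of a matrix identity expressing $\kappa(V^{!}_{I^{!}})$ as a sum over $J$ of $\widetilde{T}^{X}$-entries times a $\widetilde{T}^{X^{!}}$-pairing times $\Phi$-prefactors. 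Because $X$ is self-dual, both sets of elliptic stable envelopes are the same object up to the substitution $\kappa$, so this is genuinely an identity one can write down explicitly using the elliptic weight functions of \cite{RTV}.

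The second step is to take the index limit (Definition \ref{indlimit}) of both sides, shifting $\zz \to \zz q^{s+1}$ and sending $q \to 0$. By Proposition \ref{2index}, applied to the (renormalized) vertex function, the left side produces exactly $\Ind^{!s}_{I^{!}}$; the role of the $q^{s+1}$ shift rather than $q^s$ is precisely the compatibility noted in the remark after Definition \ref{indlimit}. On the right side, I would analyze the limit term by term. The key input is that under the substitution $\hbar^{!} = q/t$ and $\aa^{!} = q^s$ (Remark \ref{rm2}), the elliptic theta functions $\vartheta$ degenerate: $\vartheta(x) \to x^{1/2} - x^{-1/2}$ for generic $x$, so the normalized elliptic restriction matrix $\widetilde{T}^{X}$ degenerates to the normalized $K$-theoretic restriction matrix $\widetilde{A}^{s+1,X}$ at slope $s+1$ — this is the standard degeneration of elliptic to $K$-theoretic stable envelopes, with the slope appearing through the Kähler-parameter shift $\zz q^{s+1}$ and the self-duality turning $z$'s into $a^{!}$'s. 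The diagonal normalization factors $\Theta(N^{+})$ versus $\sqrt{\det N^+/\det T^{1/2}}\,\extpow(N^+)^{\vee}$ account for the conjugation by $\mathsf{D}$. Meanwhile the $\Phi$-prefactor on the right, when the slope is big enough, has trivial index limit by Lemma \ref{ample2}, and what survives of the $X^{!}$-side pairing is precisely $\sympow(\hbar \otimes N^+_J)$ — this matches the $d=0$ (constant map) term interpretation and is checked by the same kind of computation as in Lemma \ref{ample2}.

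Assembling these pieces: the left side limit is $\Ind^{!s}_{I^{!}}$, the right side limit is $\kappa$ applied to $\sum_J \mathsf{D}_{II}\,\widetilde{A}^{s+1,X}_{I,J}\,\mathsf{D}_{JJ}^{-1}\,\sympow(\hbar \otimes N^+_J)$, which is exactly the claimed formula. The genericity hypothesis $s \notin \Wall(X)$ is needed so that the elliptic-to-$K$-theoretic degeneration is clean (no resonances, so $\widetilde{A}^{s+1,X}$ is well-defined and the limits of individual theta factors exist), and ``big enough'' is needed so that the $\Phi$-prefactor and the extraneous part of the $X^{!}$ pairing both trivialize.

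\textbf{Main obstacle.} The hard part is the bookkeeping in the degeneration step: tracking all the half-integer powers of $\hbar$ and $q$ (the $z^{\#}$ versus $z$ shift from Theorem \ref{qmver}, the $(-q^{1/2}(\hbar^{!})^{-1/2})^{\pm d}$ factors from Table \ref{tab2}, and the square roots in $\mathsf{D}$ and in the $K$-theoretic normalization) and verifying that they conspire to give exactly the shift $s \to s+1$ and no leftover prefactor. One must also confirm that the index limit commutes with the (infinite) sum over fixed points $J$ and that each term's limit exists individually — this is where $s \notin \Wall(X)$ does the work, by Proposition \ref{limver} and the resonance analysis of \cite{KS2}. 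I expect the cleanest route is to phrase the elliptic identity from \cite{msflag} as a single matrix equation, take the limit of the whole equation at once using the degeneration $\vartheta(x) \to x^{1/2}-x^{-1/2}$, and only then unwind the matrix product into the stated componentwise form.
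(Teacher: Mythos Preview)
Your approach is essentially the same as the paper's: start from the mirror symmetry identity of \cite{msflag}, take the index limit of both sides, and identify each piece using Proposition \ref{2index}, Lemma \ref{ample2}, and the degeneration of elliptic to $K$-theoretic stable envelopes from \cite{KS2}.

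One clarification on the form of the starting identity: the relation from \cite{msflag} does \emph{not} involve the elliptic stable envelopes of $X^{!}$ (there is no $\widetilde{T}^{X^{!}}$-pairing). It reads
\[
\Phi\bigl((q-\hbar^{!}) N_{I^{!}}^{!+}\bigr)\, V^{!}_{I^{!}}(\aa^{!},\zz^{!})
= \kappa\Bigl(\sum_{J} \mathsf{D}_{II}\,\widetilde{T}^{X}_{I,J}\,\mathsf{D}_{JJ}^{-1}\,\Phi\bigl((q-\hbar) N_{J}^{+}\bigr)\, V_{J}(\aa,\zz)\Bigr),
\]
with the vertex functions $V_J$ of $X$ (not stable envelopes of $X^{!}$) on the right. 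In the index limit the pieces go exactly where you said: the left $\Phi$-factor and the $V_J$'s both go to $1$ by Lemma \ref{ample2} (this is where ``big enough'' is used), $\widetilde{T}^{X}\to\widetilde{A}^{s+1,X}$ by \cite{KS2} (this is where $s\notin\Wall(X)$ is used), and it is the right-hand $\Phi\bigl((q-\hbar)N_J^{+}\bigr)$ that produces $\sympow(\hbar\otimes N_J^{+})$. With this correction to the shape of the input identity, your bookkeeping concerns largely evaporate: the square-root factors are already the $\mathsf{D}$-conjugation and are $q,\zz$-independent, so no further tracking of half-integer powers is needed beyond what is absorbed into the cited results.
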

\begin{proof}
In \cite{msflag}, we proved that 
\begin{multline}\label{msflags}
\Phi((q-\hbar^{!}) N_{I^{!}}^{! +}) V^{!}_{I^{!}}(\aa^{!},\zz^{!}) \\ =\kappa\left(\sum_{J \in X^{\bT}} \sqrt{\frac{\det T^{1/2}_I X }{\det N_I^+}} \widetilde{T}^{X}_{I,J}   \sqrt{\frac{\det N_J^{+}}{\det T_J^{1/2}X}} \Phi((q-\hbar) N_{I}^{+}) V_{J}(\aa,\zz) \right)
\end{multline}
To deduce Theorem \ref{mainthm}, we take the index limit of both sides with respect to $s$. 

Lemma \ref{ample2} implies that $\Phi((q-\hbar^{!}) N_{I^{!}}^{!+})$ and $V_J(\aa,\zz)$ contribute a factor of $1$ to the limit. The term
$$
 \sqrt{\frac{\det T^{1/2}_I X }{\det N_I^+}}  \sqrt{\frac{\det N_J^{+}}{\det T_J^{1/2}X}}
$$
does not depend on $\zz$ or $q$. The limit of the normalized stable envelope $\widetilde{T}^{X}_{I,J}$ was calculated in \cite{KS2}. For $s \notin \Wall(X)$, we have 
$$
\lim_{q\to 0} \widetilde{T}^{X}_{I,J}|_{z=zq^{s+1}} = \widetilde{A}^{s+1,X}_{I,J}
$$
The contribution from $\Phi((q-\hbar)N_J^{+})$ is given by
$$
\prod_{w \in \text{char}_{\bT}(N_J^{+})} (1-\hbar w)^{-1}= \sympow\left( \hbar \otimes N_J^{+} \right)
$$
Putting all this together gives the result.
\end{proof}

The normalized $K$-theoretic stable envelopes are related to the usual ones by 
$$
A^{s+1,X}_{I,J} = \sqrt{\frac{\det N_J^+}{\det T^{1/2}_J X}} \extpow (N_J^+)^{\vee} \widetilde{A}^{s+1,X}_{I,J}
$$
Also,
$$
N_{J}^{+}= \hbar^{-1}  \left( N_{J}^{-}\right)^{\vee} \implies \sympow(\hbar \otimes N_J^+ ) = 
\sympow\left( \left( N_J^{-}\right)^{\vee} \right)
$$
So Theorem \ref{mainthm} is equivalent to
$$
\Ind^{!s}_{I^{!}}=\kappa\left( \sum_{J \in X^{\bT}} \sqrt{\frac{\det T^{1/2}_I X}{ \det N_I^+}}  A^{s+1,X}_{I,J} \sympow\left( T_JX^{\vee} \right) \right)
$$

By equivariant localization, the right side of this equation is the $K$-theoretic equivariant Euler characteristic of the $K$-theoretic stable envelope of $I$ twisted by a line bundle:
\begin{Theorem}
$$
\Ind^{!s}_{I^{!}}= \kappa \left(\chi\left(\sqrt{\frac{\det T^{1/2}_I X}{ \det N_I^+}}  \stab^{s+1,X,K}_{-\mathfrak{C}}(I) \right) \right)
$$

\end{Theorem}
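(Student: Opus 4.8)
The plan is to read this off immediately from Theorem~\ref{mainthm}, in the reformulation displayed just above the present statement, combined with the $K$-theoretic localization theorem. Since $X$ is a Nakajima quiver variety proper over its affinization with finite $\bT$-fixed locus, the equivariant $K$-theoretic Euler characteristic $\chi$ of a class in $K_{\bT}(X)$ is well defined in the localized $K_{\bT}(pt)$ and is computed by the $K$-theoretic Atiyah--Bott--Lefschetz fixed point formula (see \cite{pcmilect} and \cite{OS}):
\begin{equation*}
\chi(\mathcal{F})=\sum_{J\in X^{\bT}}\frac{\mathcal{F}|_J}{\extpow(T_JX^{\vee})}=\sum_{J\in X^{\bT}}\mathcal{F}|_J\cdot\sympow\left(T_JX^{\vee}\right),\qquad \mathcal{F}\in K_{\bT}(X),
\end{equation*}
where the second equality uses $\sympow(V)=\extpow(V)^{-1}$, which applies to $T_JX$ because the tangent space at a fixed point carries no trivial $\bT$-character.

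First I would apply this formula to the class $\mathcal{F}=\sqrt{\det T^{1/2}_IX/\det N_I^+}\;\stab^{s+1,X,K}_{-\mathfrak{C}}(I)$. The scalar prefactor is independent of $J$, so it comes out of the localization sum, and by the normalization conventions of Section~\ref{stabsection} one has $\stab^{s+1,X,K}_{-\mathfrak{C}}(I)|_J=A^{s+1,X}_{I,J}$. This gives
\begin{equation*}
\chi\left(\sqrt{\frac{\det T^{1/2}_IX}{\det N_I^+}}\;\stab^{s+1,X,K}_{-\mathfrak{C}}(I)\right)=\sum_{J\in X^{\bT}}\sqrt{\frac{\det T^{1/2}_IX}{\det N_I^+}}\;A^{s+1,X}_{I,J}\;\sympow\left(T_JX^{\vee}\right).
\end{equation*}
Applying the map $\kappa$ to both sides and comparing with the reformulation of Theorem~\ref{mainthm} recorded immediately before the present statement finishes the argument.

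I expect no real obstacle here: the content is entirely in Theorem~\ref{mainthm}, and what remains is a one-line localization computation. The only point that truly needs care is the well-definedness of $\chi$ on the non-compact $X$ and its evaluation by localization, which I would invoke as a standard fact about quiver varieties rather than reprove. The algebraic identities needed to pass from Theorem~\ref{mainthm} to its reformulation --- namely $A^{s+1,X}_{I,J}=\sqrt{\det N_J^+/\det T^{1/2}_JX}\,\extpow(N_J^+)^{\vee}\,\widetilde{A}^{s+1,X}_{I,J}$, the relation $\sympow(\hbar\otimes N_J^+)=\sympow((N_J^-)^{\vee})$ coming from $N_J^+=\hbar^{-1}(N_J^-)^{\vee}$, and the splitting $T_JX=N_J^++N_J^-$ --- have already been carried out in the paragraph preceding the theorem, so nothing further is required.
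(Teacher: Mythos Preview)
Your proposal is correct and follows exactly the paper's approach: the theorem is deduced from the reformulation of Theorem~\ref{mainthm} displayed just before it by recognizing the localization sum as the equivariant Euler characteristic, which the paper states in a single line (``By equivariant localization, the right side of this equation is the $K$-theoretic equivariant Euler characteristic\ldots''). Your write-up simply makes this one-line remark explicit.
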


As a consequence, we obtain:
\begin{Corollary}
For big enough generic slopes, the index vertex $\Ind^{! s}_{I^{!}}(\aa,\zz)$ is a rational function of $\aa^{!}$, $\zz^{!}$, and $\sqrt{\hbar^{!}}$.
\end{Corollary}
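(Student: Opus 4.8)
The Corollary is an immediate consequence of the theorem immediately preceding it, which already establishes, as an identity of power series in $\zz^{!}$, that $\Ind^{!s}_{I^{!}}$ equals $\kappa$ applied to an equivariant $K$-theoretic Euler characteristic on $X$. The plan is therefore threefold: rewrite $\Ind^{!s}_{I^{!}}$ using that theorem; show that the Euler characteristic in question is a rational function of $\aa$ and $\hbar^{1/2}$; and show that applying $\kappa$ keeps it rational, landing it in $\mathbb{C}(\aa^{!},\zz^{!},\sqrt{\hbar^{!}})$. No substantially new argument is needed: the real content was already carried out in Theorem \ref{mainthm} and in the identity (\ref{msflags}) imported from \cite{msflag}.

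For the rationality of the Euler characteristic, I would use its localization form: up to the prefactor $\sqrt{\det T^{1/2}_I X/\det N_I^+}$, it equals the finite sum $\sum_{J\in X^{\bT}} A^{s+1,X}_{I,J}\,\sympow(T_JX^{\vee})$. The prefactor is a Laurent monomial in $\aa$ and $\hbar^{1/2}$, read off from (\ref{normal}); each $A^{s+1,X}_{I,J}$ is a Laurent polynomial in $\aa$ and $\hbar^{1/2}$ by the integrality of $K$-theoretic stable envelopes (see \cite{pcmilect} section 9 and \cite{OS}, and concretely from the weight-function formulas used in \cite{msflag}); and $\sympow(T_JX^{\vee})=1/\extpow(T_JX)^{\vee}=\prod_{w}(1-w^{-1})^{-1}$, the product over the $\bT$-weights $w$ of $T_JX$ displayed after (\ref{pol}), is the reciprocal of a nonzero Laurent polynomial in $\aa,\hbar$. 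A finite sum of products of such terms lies in $\mathbb{C}(\aa,\hbar^{1/2})$; in particular it contains neither $\zz$ nor $q$. The possible non-properness of $X$ creates no difficulty here, since we never evaluate $\chi$ directly: the theorem hands us precisely this localization expression.

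Finally, $\kappa$ extends by (\ref{kap}) to an isomorphism of the relevant localized fraction fields, sending $\aa\mapsto(\hbar^{!}/q)\zz^{!}$ and $\hbar\mapsto q/\hbar^{!}$, hence $\hbar^{1/2}\mapsto(q/\hbar^{!})^{1/2}$, and it carries rational functions to rational functions. As the class under $\chi$ contains neither $\zz$ nor $q$, its $\kappa$-image is a rational function of $\aa^{!}$, $\zz^{!}$, and $(\hbar^{!})^{1/2}$, with $q$ entering only through the combinations $\hbar^{!}/q$, consistently with Definition \ref{ind} and Proposition \ref{2index}. I do not expect a genuine obstacle; the two points needing a little care are invoking the normalization under which $A^{s+1,X}_{I,J}$ is an honest Laurent polynomial (so that the only denominators are the $\extpow(T_JX)^{\vee}$), and tracking the half-integer powers of $\hbar$ through $\kappa$, which is what singles out $\sqrt{\hbar^{!}}$ rather than $\hbar^{!}$ as the correct variable.
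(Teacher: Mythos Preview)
Your proposal is correct and matches the paper's approach: the paper states the Corollary without proof, treating it as immediate from the preceding Theorem, and you have simply spelled out why that Theorem indeed yields rationality via the localization formula and the map $\kappa$. Your observation that $q$ inevitably appears (through $\hbar^{!}/q$) is accurate and consistent with the paper's own example; the Corollary's omission of $q$ from the list of variables is a minor imprecision in the paper rather than a defect in your argument.
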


\subsection{Non-generic slopes}
The limits of the elliptic stable envelopes for non-generic slopes is one of the main results of \cite{KS2}. 
Under the identification given by $\kappa$, a slope $s \in \Lie_{\mathbb{R}}(\bK)$ can be viewed as an element of $\Lie_{\mathbb{R}}(\bA^{!})$, which has an associated cyclic subgroup $\nu_{s}\subset \bA^{!}$ as in (\ref{resgroup}). In particular, if $s \in \Wall(X)=\Res(X^{!})$, then we obtain a subvariety
$$
Y_s := (X^{!})^{\nu_{s}} \neq (X^{!})^{\bA^{!}}
$$
The limits of the elliptic stable envelopes for non-generic slopes is expressed in terms of the $K$-theoretic stable envelopes of $Y_s$. We need to explain the choice of chamber, polarization, and slope used in the latter.


We first explain the choice of slope. Let $\mathscr{U}_0$ denote an open analytic neighborhood of $0$ in $\Lie_{\mathbb{R}}(\bK^{!})$. Let $\Wall_0(X^{!})$ be the set of walls passing through $0$. Then $\mathscr{U}_0 \setminus \Wall_0(X^{!})$ is the disjoint union of connected components. Let $\mathfrak{D}_+(X^{!})$ denote the connected component containing ample line bundles on $X^{!}$. Explicitly, $\mathfrak{D}_+(X^{!})$ is generated by small positive real multiples of the Chern classes of the tautological line bundles $\mathscr{L}^{!}_i$ for $i\in\{1,\ldots, n-1\}$.

The inclusion
$$
\iota: Y_s \to X^{!}
$$
induces a map
$$
\iota^{*}: \Pic(X^{!})\otimes_{\mathbb{Z}}\mathbb{R} \to \Pic(Y_s)\otimes_{\mathbb{Z}}\mathbb{R}
$$
Define $\mathfrak{D}_+(Y_s)=\iota^{*}(\mathfrak{D}_+(X^{!}))$. Slopes in $\mathfrak{D}_{+}(Y_s)$ will be the right choice for the stable envelopes of $Y_s$.

Since the torus $\bA^{!}$ acts on $Y_s$, the chamber $-\mathfrak{C}$ automatically gives a chamber for the stable envelopes of $Y_s$. 

The $\nu_s$-invariant part of the polarization for $X^{!}$ gives a polarization $\left(T^{1/2}X^{!}\right)^{\nu_s}$ of $Y_s$. 

With respect to these choices, we can now talk about the $K$-theoretic stable envelopes of $Y_{s}$.

\begin{Theorem}\label{mainthm2}
Let $s\in \Wall(X)$ be big enough and let $\epsilon\in \mathfrak{D}_+(X)$ be a small ample slope of $X$ such that $s'=s+\epsilon$ is a generic slope. Then
$$
\Ind^{!s} =\kappa\left( \mathsf{D} \cdot \mathsf{H} \cdot \widetilde{A}^{\mathfrak{D}_+(Y_{s+1}),Y_{s+1}}\cdot \mathsf{H}^{-1}  \cdot \widetilde{A}^{s',X} \cdot \mathsf{D}^{-1} \cdot \sympow\left( \hbar \otimes N^+ \right) \right)
$$
where $\mathsf{H}$ is the diagonal matrix given by
$$
\mathsf{H}= \text{diag}\left((-1)^{\gamma_{I}(s)} \hbar^{m_I(s)/2} \prod_{i=1}^{n-1} \mathscr{L}_i|_{I}\right)_{I \in X^{\bT}}
$$
where $\gamma_I(s)$ and $m_I(s)$ are integers.
\end{Theorem}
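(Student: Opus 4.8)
The plan is to run the same argument as in Theorem \ref{mainthm}, but now taking the index limit of (\ref{msflags}) along a \emph{wall} $s \in \Wall(X) = \Res(X^{!})$ rather than at a generic point. As before, Lemma \ref{ample2} (which only needs $s$ big enough, not generic) kills the $\Phi((q-\hbar^{!})N_{I^{!}}^{!+})$ prefactor on the left and the bare vertex functions $V_J(\aa,\zz)$ on the right, and the prefactor $\Phi((q-\hbar)N_J^{+})$ again contributes $\sympow(\hbar\otimes N_J^{+})$. So the entire content of the theorem is the computation of
$$
\lim_{q\to 0} \widetilde{T}^{X}_{I,J}\big|_{\zz \to \zz q^{s+1}}
$$
when $s+1$ lies on a wall of $X$. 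First I would invoke the main result of \cite{KS2} on the wall-limit of normalized elliptic stable envelopes: as $s'=s+\epsilon$ degenerates to the wall, $\widetilde{T}^{X}$ does not converge to a single $K$-theoretic restriction matrix, but rather factors as a product of the restriction matrix $\widetilde{A}^{s',X}$ for the nearby generic slope (the ``outer'' limit toward the chamber containing $\epsilon$) with a restriction matrix $\widetilde{A}^{\mathfrak{D}_+(Y_{s+1}),Y_{s+1}}$ of $K$-theoretic stable envelopes of the fixed subvariety $Y_{s+1}=(X^{!})^{\nu_{s+1}}$, with chamber $-\mathfrak{C}$, the restricted polarization $(T^{1/2}X^{!})^{\nu_{s+1}}$, and slope $\mathfrak{D}_+(Y_{s+1})$. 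The only genuinely new wrinkle is the diagonal conjugating matrix $\mathsf{H}$: this encodes the mismatch between the normalization of the elliptic stable envelope of $X^{!}$ restricted to $Y_{s+1}$ and the normalization we have fixed for the $K$-theoretic stable envelopes of $Y_{s+1}$ — i.e. the ratio of the diagonal entries $T^{X^!}_{I,I}$ (after restriction) and the prescribed diagonal of $\stab^{\mathfrak{D}_+(Y_{s+1}),Y_{s+1},K}$. Since both are explicit products over weights of $N^{+}$, that ratio is a monomial in $\hbar^{1/2}$, a sign, and a product of tautological line bundle restrictions, which is exactly the stated form $(-1)^{\gamma_I(s)}\hbar^{m_I(s)/2}\prod_i \mathscr{L}_i|_I$; the integers $\gamma_I(s), m_I(s)$ are read off from the weight combinatorics of (\ref{normal}) together with the $\nu_{s+1}$-decomposition.

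Concretely, the steps in order are: (1) apply the index limit to both sides of (\ref{msflags}) with the shift $\zz\to\zz q^{s+1}$, using Lemma \ref{ample2} to trivialize the $\Phi$ prefactor on the left and the $V_J$'s on the right; (2) pull the $q$- and $\zz$-independent factor $\sqrt{\det T^{1/2}_I X/\det N^+_I}\,\sqrt{\det N^+_J/\det T^{1/2}_J X}$ out of the limit, which produces the outer $\mathsf{D}\,(\cdot)\,\mathsf{D}^{-1}$ conjugation; (3) quote \cite{KS2} for the wall-limit of $\widetilde{T}^X$, obtaining the matrix product $\mathsf{H}\cdot \widetilde{A}^{\mathfrak{D}_+(Y_{s+1}),Y_{s+1}}\cdot \mathsf{H}^{-1}\cdot \widetilde{A}^{s',X}$ — here one must check that the slope $s+1$ being big enough guarantees $s' = s+\epsilon$ is generic for small ample $\epsilon$ and that $s'$ stays ``big enough'' so the earlier trivializations remain valid; (4) identify $\mathsf{H}$ by comparing the diagonal normalization of the elliptic stable envelope of $X^{!}$, restricted along $\iota: Y_{s+1}\to X^{!}$, with the fixed $K$-theoretic normalization on $Y_{s+1}$, reading off $\gamma_I(s)$ and $m_I(s)$ from the weight decomposition; (5) reassemble, picking up $\sympow(\hbar\otimes N^+_J)$ from $\Phi((q-\hbar)N^+_J)$ as in Theorem \ref{mainthm}, to get the displayed formula.

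The main obstacle is step (3)–(4): correctly transcribing the wall-crossing factorization of \cite{KS2} into our normalization conventions. The subtlety is bookkeeping — our elliptic stable envelopes are normalized by $T^X_{I,I}=\Theta(N_I^+)$ (a sign off from \cite{AOElliptic}), our $K$-theoretic ones by $\sqrt{\det N^+/\det T^{1/2}}\,\extpow(N^+)^\vee$, and the stable envelopes of $Y_{s+1}$ use the \emph{restricted} polarization and the induced chamber $-\mathfrak{C}$; chasing these through the $\nu_{s+1}$-fixed-locus decomposition of $N^+_I = (N^+_I)^{\nu_{s+1}} \oplus (\text{moving part})$ is where the diagonal matrix $\mathsf{H}$ and its exact exponents $\gamma_I(s), m_I(s)$ come from, and getting every square root branch and sign consistent is the delicate part. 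Everything else is a formal repeat of the proof of Theorem \ref{mainthm}.
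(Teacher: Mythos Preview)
Your proposal is correct and takes essentially the same route as the paper: both apply the index limit to (\ref{msflags}) at a wall slope, use Lemma \ref{ample2} to trivialize the $\Phi$-prefactor on the left and the bare vertices $V_J$ on the right, and then invoke the wall-limit factorization from \cite{KS2} (Theorems 3 and 5 there) for $\lim_{q\to 0}\widetilde{T}^X\big|_{\zz\to\zz q^{s+1}}$, with $\mathsf{H}$ and the integers $\gamma_I(s)$, $m_I(s)$ simply cited from \cite{KS1} Theorem 2 and \cite{KS2} Theorem 4 rather than rederived. One small slip in your step (4): the elliptic stable envelope whose limit is being factored is $\widetilde{T}^X$ (of $X$, not of $X^{!}$), though $Y_{s+1}$ is indeed a subvariety of $X^{!}$ via the $\kappa$-identification $\Wall(X)\cong\Res(X^{!})$; also, the ``big enough'' hypothesis is only needed for $s$ (the shift is $\zz\to\zz q^{s+1}$), not for $s'$.
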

\begin{proof}
From Theorem 3 and Theorem 5 in \cite{KS2}, the twisted limit of the restriction matrix of the elliptic stable envelope is
$$
\lim_{q\to 0} \widetilde{T}^{X}|_{z=z q^{s+1}} = \widetilde{Z}^{'} \cdot \widetilde{A}^{s'+1,X}
$$
Here, the matrix $\widetilde{Z}^{'}$ is given by
$$
\widetilde{Z}^{'}= \mathsf{H} \cdot \widetilde{A}^{\mathfrak{D}_{+}(Y_{s+1}),Y_{s+1}}\cdot \mathsf{H}^{-1}
$$
where $\mathsf{H}$ is the diagonal matrix 
$$
\mathsf{H}=\text{diag} \left( (-1)^{\gamma_{I}(s)} \hbar^{m_{I}(s)/2} \left(\prod_{i=1}^{n-1} \mathscr{L}_i|_{I}\right) \right)_{I \in X^{\bT}}
$$ 
for integers $\gamma_{I}(s)$ and $m_{I}(s)$. For the precise description of these integers, see Theorem 2 in \cite{KS1} and Theorem 4 in \cite{KS2}.

The limits of the rest of the terms in (\ref{msflags}) are the same as in Theorem \ref{mainthm}, which finishes the proof.
\end{proof}

Contrary to the case of generic $s$, the right hand side of Theorem \ref{mainthm2} does not admit a nice interpretation as an equivariant Euler characteristic of a simple twist of the $K$-theoretic stable envelope of $X$. However, we still have:

\begin{Corollary}
For any big enough $s \in  \Lie_{\mathbb{R}}(\bK)$, the index vertex $\Ind^{! s}_{I^{!}}$ is a rational function of $\aa^{!}$, $\zz^{!}$, and $\sqrt{\hbar^{!}}$.
\end{Corollary}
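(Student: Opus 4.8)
The plan is to read off the statement directly from Theorem \ref{mainthm2}, exactly as the generic Corollary was read off from Theorem \ref{mainthm}. Theorem \ref{mainthm2} expresses $\Ind^{!s}$ as
$$
\kappa\left( \mathsf{D} \cdot \mathsf{H} \cdot \widetilde{A}^{\mathfrak{D}_+(Y_{s+1}),Y_{s+1}}\cdot \mathsf{H}^{-1}  \cdot \widetilde{A}^{s',X} \cdot \mathsf{D}^{-1} \cdot \sympow\left( \hbar \otimes N^+ \right) \right),
$$
so it suffices to check that every factor inside $\kappa(\cdots)$ is, componentwise, a rational function of $\aa$, $\zz$ and $\sqrt{\hbar}$, and then to observe that $\kappa$ sends such rational functions to rational functions of $\aa^{!}$, $\zz^{!}$, $\sqrt{\hbar^{!}}$ (and $q$, which does not occur after the index limit has been taken — this is exactly the content of the index-limit construction, cf. Remark \ref{rm2} and Proposition \ref{2index}).

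First I would treat the diagonal matrices. The entries of $\mathsf{D}$ are $\sqrt{\det T^{1/2}_I X/\det N_I^+}$; by the explicit fixed-point formulas for $T^{1/2}_I X$ and for $N_I^\pm$ in (\ref{normal}), these are monomials in $\aa$ and $\sqrt{\hbar}$, hence rational. The entries of $\mathsf{H}$ are $(-1)^{\gamma_I(s)}\hbar^{m_I(s)/2}\prod_i \mathscr{L}_i|_I$ with $\gamma_I(s),m_I(s)\in\mathbb{Z}$ and $\mathscr{L}_i|_I$ a monomial in the equivariant parameters; again a monomial in $\aa$ and $\sqrt{\hbar}$. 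The factor $\sympow(\hbar\otimes N_I^+)=\prod_{w\in\mathrm{char}_{\bT}(N_I^+)}(1-\hbar w)^{-1}$ is manifestly a rational function of $\aa$ and $\hbar$. It remains to handle the two renormalized $K$-theoretic stable envelope matrices. The matrix $\widetilde{A}^{s',X}$ is the renormalized restriction matrix of the $K$-theoretic stable envelope of $X$ at the generic slope $s'=s+\epsilon$; $K$-theoretic stable envelopes are, by construction (see \cite{pcmilect} section 9, \cite{OS} section 2), classes in the localized equivariant $K$-theory of $X$, whose fixed-point restrictions are rational functions of the equivariant and Kähler parameters and $\sqrt{\hbar}$, so its entries are rational in $\aa$, $\zz$, $\sqrt{\hbar}$. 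The same argument applies verbatim to $\widetilde{A}^{\mathfrak{D}_+(Y_{s+1}),Y_{s+1}}$, the renormalized restriction matrix of the $K$-theoretic stable envelope of the subvariety $Y_{s+1}\subset X^{!}$: it too is a matrix of rational functions of the relevant equivariant and Kähler parameters and $\sqrt{\hbar}$.

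Assembling: the product $\mathsf{D}\cdot\mathsf{H}\cdot\widetilde{A}^{\mathfrak{D}_+(Y_{s+1}),Y_{s+1}}\cdot\mathsf{H}^{-1}\cdot\widetilde{A}^{s',X}\cdot\mathsf{D}^{-1}\cdot\sympow(\hbar\otimes N^+)$ is a finite product and sum of rational functions, hence each component is a rational function of $\aa$, $\zz$, $\sqrt{\hbar}$. Applying $\kappa$, which by (\ref{kap}) substitutes $z_i\mapsto\hbar^{!}a_i^{!}$, $a_i\mapsto (\hbar^{!}/q)z_i^{!}$, $\hbar\mapsto q/\hbar^{!}$, produces a rational function of $\aa^{!}$, $\zz^{!}$, $\sqrt{\hbar^{!}}$ and $q$; but the left-hand side $\Ind^{!s}_{I^{!}}$ is by definition the index limit of the vertex function, which is $q$-independent, so the $q$-dependence must cancel and we are left with a rational function of $\aa^{!}$, $\zz^{!}$, $\sqrt{\hbar^{!}}$, as claimed.

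The only real subtlety — the point I would be most careful about — is the claim that the renormalized stable-envelope restriction matrices genuinely have \emph{rational} (not merely meromorphic or formal-power-series) entries; this needs the standard fact that $K$-theoretic stable envelopes of Nakajima quiver varieties exist as honest classes in $K_{\bT}(X)_{loc}$ with algebraic (rational) fixed-point restrictions, which is guaranteed by the existence results of \cite{OS}, \cite{pcmilect} under the hypotheses of the introduction, together with the fact that $Y_{s+1}$ is again a Nakajima quiver variety (a product of smaller cotangent bundles of flag varieties) to which those results apply. Once that is in hand, the rest is bookkeeping, and no new computation beyond what was already used in Theorem \ref{mainthm2} is required.
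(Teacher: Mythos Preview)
Your proposal is correct and takes essentially the same approach as the paper: the paper gives no explicit proof, treating the corollary as an immediate consequence of Theorem \ref{mainthm2} (together with the earlier Corollary following Theorem \ref{mainthm} for the generic case). Your careful verification that each matrix factor has rational entries and that $\kappa$ preserves rationality simply spells out what the paper leaves implicit; one minor simplification is that the $K$-theoretic stable envelope restriction matrices $\widetilde{A}^{s',X}$ and $\widetilde{A}^{\mathfrak{D}_+(Y_{s+1}),Y_{s+1}}$ do not depend on $\zz$ at all (the slope enters only as a parameter), so no K\"ahler variables appear inside $\kappa(\cdots)$.
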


\section{Example: \texorpdfstring{$T^*\mathbb{P}^1$}{P}}
We work out our main result for generic slopes explicitly here in the simplest example: $X=T^*\mathbb{P}^1$. This corresponds to the quiver shown in Figure \ref{fig2}.

\begin{figure}[ht]
\centering
\begin{tikzpicture}[roundnode/.style={circle,fill,inner sep=2.5pt},squarednode/.style={rectangle,fill,inner sep=3pt}] 
\node[squarednode,label=below:{$2$}](F1) at (0,-1.5){};
\node[roundnode,label=above:{$1$}](V1) at (0,0){};
\draw[thick, ->] (V1) -- (F1);
\end{tikzpicture}
\caption{The quiver data for the variety $T^*\mathbb{P}^1$.} \label{fig2}
\end{figure}
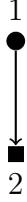
Let $V=\mathbb{C}$ and $W=\mathbb{C}^2$. Points in $X$ are represented by maps
$$
(I,J) \in Hom(W,V)\oplus Hom(V,W)
$$
such that $I$ is injective and $I \circ J=0$, modulo the action of $GL(1)$ on $V$. The torus $(\mathbb{C}^{\times})^{2}$ acts on $\mathbb{C}^2$ by
$$
(u_1,u_2) \cdot (x_1,x_2)=(u_1^{-1}x_1,u_2^{-1}x_2)
$$
which induces an action of $\bA=\mathbb{C}^{\times}$, where the coordinate on this torus is $a=u_1/u_2$.

The torus $\mathbb{C}^{\times}_{\hbar}$ acts by
$$
\hbar \cdot (I,J) = (\hbar^{-1}I,J)
$$
We denote $\bT=\bA\times \mathbb{C}^{\times}_{\hbar}$. There are two $\bT$-fixed points $p_1$ and $p_2$, which are labeled by the identity permutation and the simple transposition, respectively. The vector space $V$ descends to a line bundle $\mathcal{V}$ on $X$ and the $\bT$-character of this tautological bundle at the fixed point $p_i$ is $u_i$.

We denote the dual copy of $T^*\mathbb{P}^1$ as $X^{!}$ and denote the torus as $\bT^{!}=\bA^{!}\times \mathbb{C}^{\times}_{\hbar^{!}}$. The exchange of equivariant and K\"ahler parameters is given by (\ref{kap}) and the bijection on fixed points is the identity map.

\subsection{Torus fixed quasimaps}


A quasimap $f\in \qm^{d}_{p_i}$ to $X^{!}$ consists of a degree $d$ line bundle $\mathscr{V}$ and a trivial rank 2 vector bundle $\mathscr{W}$ over $\mathbb{P}^1$, along with a section
$$
f\in  H^{0}(\mathbb{P}^1,\mathscr{M} \oplus (\hbar^{!})^{-1} (\mathscr{M})^{\vee})
$$
where 
$$
\mathscr{M}= Hom(\mathscr{V},\mathscr{W})
$$
Assume the quasimap is $\bT^{!}_{q}$-fixed. Equivariantly with respect to the torus $\bT^{!}$, we have
$$
\mathscr{W}={u_1^{!}} \mathcal{O}_{\mathbb{P}^1} \oplus {u_2^{!}} \mathcal{O}_{\mathbb{P}^1}
$$
where $w \mathcal{O}_{\mathbb{P}^1}$ denotes a twist of the structure sheaf of $\mathbb{P}^1$ by a trivial line bundle with weight $w$. Since the section $f$ takes the value $p_i$ at $\infty$, in order $f$ to be $\bT^{!}$-fixed, $\mathscr{V}$ must have $\bT^{!}$-weight ${u_i^{!}}$. Furthermore, the component of the section in 
$$
Hom(\mathscr{V},u_i^{!}\mathcal{O}_{\mathbb{P}^{!}})\cong \mathcal{O}(-d)
$$
must be nonzero at $\infty$. So $d \leq 0$ and there is only one such section, which is given in homogeneous coordinates by $x_0^{-d}$. To be invariant under $\mathbb{C}^{\times}_q$, the bundle $\mathscr{V}$ must be further twisted by the trivial line bundle with $\mathbb{C}^{\times}_q$-weight of $q^{-d}$. This completely determines the quasimap.

To summarize, there is a unique $\bT^{!}_{q}$-fixed quasimap in $\qm^{d}_{p_i}$ for each $d \leq 0$, and the data is as follows:
\begin{itemize}
    \item $\mathscr{W}=u_1^{!} \mathcal{O}_{\mathbb{P}^{!}} \oplus u_2^{!} \mathcal{O}_{\mathbb{P}^{!}} $
    \item $\mathscr{V}=u_i^{!} q^{-d} \mathcal{O}(-d)$
    \item $f=x_0^{-d}$
\end{itemize}

\subsection{Virtual tangent space}
Fix $d \leq 0$ and let $\mathscr{W}$, $\mathscr{V}$, and $f_d$ be the unique $\bT^{!}_q$-fixed quasimap of degree $d$ as above. We have the induced virtual bundle from (\ref{Tbundle}):
\begin{equation}\label{qmpolp1}
\mathcal{T}^{1/2}=\mathscr{V}^{\vee} \otimes \mathscr{W} - \mathscr{V}^{\vee}\otimes \mathscr{V} = q^{-d} \mathcal{O}(-d) + u^{!} q^{-d} \mathcal{O}(-d)-\mathcal{O}_{\mathbb{P}^1}
\end{equation}
where 
$$
u^{!}= \frac{u_j}{u_i}, \quad j \in \{1,2\}\setminus \{i\}
$$

The reduced virtual tangent space at $f_d$ is
$$
\mathscr{T}^{\mathrm{red}}_{\mathrm{vir},f_d}=H^*(\mathcal{T}^{1/2}\oplus \hbar^{-1}(\mathcal{T}^{1/2})^{\vee})- T_{p_i} X
$$
The action of $\mathbb{C}^{\times}_q$ in (\ref{Cq}) is such that the global sections of $\mathcal{O}(m)$ for $m\geq 0$ have character $1+q^{-1}+\ldots+q^{-m}$. Thus, any given term $x q^{-d} \mathcal{O}(-d)$ in $\mathscr{T}^{1/2}$ of (\ref{qmpolp1}) leads to contributions of the form
\begin{multline*}
H^*\left(x q^{-d} \mathcal{O}(-d)+ (\hbar^{!})^{-1} x^{-1} q^{d} \mathcal{O}(d)\right)-x-(\hbar^{!})^{-1}x^{-1} \\=
xq\left(1+q+\ldots+q^{-d-1}\right)-\frac{1}{\hbar^{!} x}\left(1+q^{-1}+\ldots +q^{d+1}\right) 
\end{multline*}
So the $\bT^{!}_q$-character of the reduced virtual tangent space at $f$ is
\begin{align} \nonumber\label{tvir}
\mathscr{T}^{\mathrm{red}}_{\mathrm{vir},f_d}= q\left(1+q+\ldots + q^{-d-1}\right)&-\frac{1}{\hbar^{!}}\left(1+q^{-1}+\ldots+q^{d+1}\right)  \\ + q u^{!}\left(1+q+\ldots+q^{-d-1}\right)&-\frac{1}{\hbar^{!} u^{!}}\left(1+q^{-1}+\ldots+q^{d+1}\right)
\end{align}

\subsection{Vertex functions}

The twist in \cite{pcmilect} section 6.1.8 that transforms the virtual structure sheaf to the symmetrized virtual structure sheaf means that the vertex function from Theorem \ref{qmver} is
$$
\sum_{d \leq 0} (z^{!})^{-d} q^{\deg \mathcal{T}^{1/2}/2} \hat{a}\left(\mathscr{T}^{\mathrm{red}}_{\mathrm{vir},f_d}\right)
$$
Writing this out explicitly, we find that for the fixed point $p_i$ with $u^{!}$ as above, we have
\begin{align*}
\sum_{d \leq 0} (z^{!})^{-d} q^{\deg \mathcal{T}^{1/2}/2} \hat{a}\left(\mathscr{T}^{\mathrm{red}}_{\mathrm{vir},f_d}\right)&=\sum_{d = 0}^{\infty} (z^{!})^{d} q^{d} \frac{(\hbar^{!})_d }{(q)_d}\left(-\frac{q}{\hbar^{!}}\right)^{d/2} \frac{(\hbar^{!} u^{!})_d}{ (q u^{!})_d} \left(-\frac{q}{\hbar^{!}}\right)^{d/2} \\
&=\sum_{d \geq 0} \left(\frac{q^2}{\hbar^{!}} z^{!} \right)^d \frac{(\hbar^{!})_d }{(q)_d} \frac{(\hbar^{!} u^{!})_d}{ (q u^{!})_d}
\end{align*}

The vertex functions from Definition \ref{ver} are
\begin{align*}
    V_{p_1}^{!}(a^{!},z^{!})=\sum_{d=0}^{\infty} \frac{(\hbar^{!})_d (\hbar^{!}u_2^{!}/u_1^{!})_d}{(q)_d (q u_2^{!}/u_1^{!})_d} (z^{!})^{d} \\
   V_{p_2}^{!}(a^{!},z^{!})=\sum_{d=0}^{\infty} \frac{(\hbar^{!})_d (\hbar^{!}u_1^{!}/u_2^{!})_d}{(q)_d (qu_1^{!}/u_2^{!})_d} (z^{!})^{d} 
\end{align*}
which clearly agree with the previously ones after renormalizing the K\"ahler parameter.

\subsection{Index}
With respect to the choice of chamber (\ref{chamb}), the attracting weight is $a^{!}:=u^{!}_1/u^{!}_2$. Write $f_{d}^i$ for the unique $\bT^{!}_q$-fixed quasimap of degree $d$ on $\qm^{d}_{p_i}$. The polarization of the reduced virtual tangent space (\ref{tvir}) for $\qm^{d}_{p_1}$ at $f^{1}_d$ is
$$
\mathscr{T}^{1/2}_{f^{1}_d}=q\left(1+q+\ldots+q^{-d-1}\right) +\frac{q}{a^{!}}\left(1+q+\ldots+q^{-d-1}\right)
$$
Thus the $s$-index from Definition \ref{sindex} is
$$
\mathcal{I}_s\left(f\right) = \begin{cases}
 -d & -d <s  \\
 \lfloor s \rfloor & 1<s<-d \\
0 & s<1
\end{cases}
$$
where $\lfloor \cdot \rfloor$ denotes the floor function. Hence
\begin{align*}
\Ind^{! s}_{p_1}&=\begin{dcases} 
\frac{1-\left(z^{!} \hbar^{!}/q\right)^{\lfloor s\rfloor +1 }}{1-z^{!} \hbar^{!}/q} + \frac{(\hbar^{!}/q)^{\lfloor s \rfloor } z^{! \lfloor s \rfloor +1}}{1-z^{!}} & s>1 \\
\frac{1}{1-z^{!}} & s <1
\end{dcases}
\end{align*}
Similarly, the polarization of the virtual tangent space (\ref{tvir}) for $\qm^{d}_{p_2}$ at $f^{2}_d$ is
$$
\mathscr{T}^{1/2}\big|_{p_2}=q\left(1+q+\ldots+q^{-d-1}\right) +a^{!}q\left(1+q+\ldots+q^{-d-1}\right)
$$
So
$$
\mathcal{I}_s\left(f\right) = \begin{dcases}
0 & s>-1 \\
 \lfloor |s| \rfloor & d < s<-1 \\
-d=|d|  & s<d
\end{dcases}
$$
and
\begin{align*}
\Ind^{! s}_{p_2}&=\begin{dcases} 
\frac{1-\left(z^{!} \hbar^{!}/q\right)^{\lfloor | s | \rfloor +1}}{1-z^{!} \hbar^{!}/q} + \frac{(\hbar^{!}/q)^{\lfloor | s | \rfloor } z^{! \lfloor | s | \rfloor +1}}{1-z^{!}} & s<-1 \\
\frac{1}{1-z^{!}} & s >-1
\end{dcases}
\end{align*}

\subsection{Stable envelopes}
Now we switch from $X^{!}$ to $X$. We need to compute the $K$-theoretic stable envelopes for $X$. With respect to the chamber $\mathfrak{C}$ from (\ref{chamb}), $a:=u_1/u_2$ is an attracting weight. The choice of polarization is assumed to be 
$$
T^{1/2}X=Hom(V,W)-Hom(V,V)
$$
Under (\ref{lierk}), big enough slopes are given by $s>0$.

Analogous to the argument given in section 7.1 of \cite{OS}, one can use the defining properties of stable envelopes to calculate the matrix of the $K$-theoretic stable envelope in the basis of fixed points ordered as $[p_1,p_2]$ to be:
\begingroup
\renewcommand*{\arraystretch}{2}
$$
A^{s,X}=\left(\stab^{s,X,K}_{-\mathfrak{C}}(I)|_{J}\right)_{I,J\in X^{\bT}}=
\begin{pmatrix}
\left(\frac{a}{\hbar}-1\right)\sqrt{\hbar} & \left( \hbar^{-1}-1\right) \sqrt{\hbar} a^{\lfloor s \rfloor}\\
0 & (1-a^{-1})
\end{pmatrix}
$$ 
\endgroup
Alternatively, one could use the formulas \cite{dinkinselliptic} and their implementation in Maple described there. Normalizing by dividing each column by the diagonal entry gives
\begingroup
\renewcommand*{\arraystretch}{2}
$$
\widetilde{A}^{s,X}=
\begin{pmatrix}
1 & -\dfrac{\left( 1-\hbar\right)a^{\lfloor s \rfloor+1}}{(1-a)\sqrt{\hbar}}\\
0 & 1
\end{pmatrix}
$$ 
\endgroup
The polarization at the fixed points is
\begin{align*}
    T^{1/2}X|_{p_1}&=a^{-1} \\
    T^{1/2}X|_{p_2}&=a
\end{align*}
So conjugating by $\mathsf{D}$ in (\ref{Dmat}) and multiplying on the right by $\sympow\left(\hbar\otimes N^{+}\right)$ gives
\begingroup
\renewcommand*{\arraystretch}{2}
$$
\mathsf{D}\cdot \widetilde{A}^{s,X} \cdot \mathsf{D}^{-1} \cdot \sympow(\hbar \otimes N^+)=
\begin{pmatrix}
\dfrac{1}{1-a}  -\dfrac{\left( 1-\hbar\right)a^{\lfloor s \rfloor}}{(1-a)(1-\hbar a)}\\
 \dfrac{1}{1-\hbar a}
\end{pmatrix}
$$ 
\endgroup
Applying $\kappa$ and summing the two components of this vector, we see that Theorem $\ref{mainthm}$ for $s>1$ reads
\begin{align*}
   \frac{1-\left(z^{!} \hbar^{!}/q\right)^{\lfloor s\rfloor +1 }}{1-z^{!} \hbar^{!}/q} + \frac{(\hbar^{!}/q)^{\lfloor s \rfloor } z^{! \lfloor s \rfloor +1}}{1-z^{!}} &=\frac{1}{1-z^{!} \hbar^{!}/q}  -\frac{\left( 1-q/\hbar^{!}\right)\left(z^{!} \hbar^{!}/q\right)^{\lfloor s+1 \rfloor}}{\left(1-z^{!} \hbar^{!}/q\right)\left(1-z^{!}\right)} \\
 \frac{1}{1-z^{!}}&= \frac{1}{1-q/\hbar^{!} ( z^{!} \hbar^{!}/q)} 
\end{align*}
which is easily checked to be true. For $0<s<1$, Theorem \ref{mainthm} reads
\begin{align*}
    \frac{1}{1-z^{!}}&=\frac{1}{1-z^{!}\hbar^{!}/q}-\frac{\left(1-q/\hbar^{!}\right)\left(z^{!}\hbar^{!}/q\right)}{\left(1-z^{!}\hbar^{!}/q\right)\left(1-z^{!}\right)} \\
    \frac{1}{1-z^{!}}&= \frac{1}{1-q/\hbar^{!} ( z^{!} \hbar^{!}/q)} 
\end{align*}
which is also true.

\printbibliography

@incollection{pcmilect,
      author         = "Okounkov, Andrei",
      title          = "{Lectures on K-theoretic computations in enumerative
                        geometry}",
    booktitle= {Geometry of Moduli Spaces and Representation Theory},
    publisher={American Mathematical Society},
    series={IAS/Park City Mathematics Series},
      year           = "2017",
      volume={24}
}

@preamble{
   "\def\cprime{$'$} "
}

@article{NO,
author = {Nekrasov, N. and Okounkov, Andrei},
year = {2014},
month = {04},
pages = {},
title = {Membranes and Sheaves},
volume = {3},
journal = {Algebraic Geometry},
doi = {10.14231/AG-2016-015},
%   eprint         = "1404.2323",
%       archivePrefix  = "arXiv",
%       primaryClass   = "math.AG",
}

@online{OS,
      author         = "Okounkov, Andrei and Smirnov, Andrey",
      title          = "{Quantum difference equation for Nakajima varieties}",
      year           = "2016",
      eprint         = "1602.09007",
      archivePrefix  = "arXiv",
      primaryClass   = "math-ph",
    
}

@article{AOElliptic,
       author = {{Aganagic}, Mina and {Okounkov}, Andrei},
        title = "{Elliptic stable envelopes}",
      journal = {J. Amer. Math. Soc.},
     keywords = {Mathematics - Algebraic Geometry, High Energy Physics - Theory, Mathematical Physics, Mathematics - Representation Theory},
     volume={34},
         year = "2021",
        month = "4",
        pages={79-133},
% archivePrefix = {arXiv},
%       eprint = {1604.00423v4},
%  primaryClass = {math.AG},
       adsurl = {https://ui.adsabs.harvard.edu/abs/2016arXiv160400423A},
      adsnote = {Provided by the SAO/NASA Astrophysics Data System}
}

@article{Pushk1,
author = {Pushkar, Petr and Smirnov, Andrey and Zeitlin, Anton},
year = {2016},
month = {12},
title = {Baxter Q-operator from quantum K-theory},
volume = {360},
journal = {Adv. Math.},
doi = {10.1016/j.aim.2019.106919}
}

@article{SmirnovElliptic,
author = {Smirnov, Andrey},
year = {2019},
month = {12},
pages = {},
title = {Elliptic stable envelope for Hilbert scheme of points in the plane},
volume = {26},
journal = {Selecta Mathematica},
doi = {10.1007/s00029-019-0527-2}
}

@article{RTV,
author = {Rimányi, R. and Tarasov, V. and Varchenko, A.},
year = {2019},
month = {02},
pages = {},
title = {Elliptic and K-theoretic stable envelopes and Newton polytopes},
volume = {25},
journal = {Selecta Mathematica},
doi = {10.1007/s00029-019-0451-5}
}

@article {kirv,
	AUTHOR = {McGerty, Kevin and Nevins, Thomas},
	TITLE = {Kirwan surjectivity for quiver varieties},
	JOURNAL = {Invent. Math.},
	VOLUME = {212},
	YEAR = {2018},
	month={04},
	NUMBER = {1},
	PAGES = {161--187},
	doi = {10.1007/s00222-017-0765-x}
}

@article{MirSym2,
       author = {{Rim{\'a}nyi}, Rich{\'a}rd and {Smirnov}, Andrey and {Varchenko}, Alexander and {Zhou}, Zijun},
        title = "{Three-Dimensional Mirror Self-Symmetry of the Cotangent Bundle of the Full Flag Variety}",
      journal = {SIGMA},
         year = 2019,
        month = nov,
       volume = {15},
          eid = {093},
        pages = {093},
          doi = {10.3842/SIGMA.2019.093},
% archivePrefix = {arXiv},
%       eprint = {1906.00134},
%  primaryClass = {math.AG},
}

@article{GaKor,
author = {Gaiotto, Davide and Koroteev, Peter},
year = {2013},
month = {},
pages = {},
number = {126},
title = {On Three Dimensional Quiver Gauge Theories and Integrability},
journal = {Journal of High Energy Physics},
doi = {10.1007/JHEP05(2013)126}
}

@article{NakBow,
  title={Cherkis bow varieties and Coulomb branches of quiver gauge theories of affine type A},
  author={H. Nakajima and Yuuya Takayama},
  journal={Selecta Mathematica},
  year={2017},
  month={10},
  volume={23},
  pages={2553-2633},
  doi = {10.1007/s00029-017-0341-7}
}

@article{Cherk1,
   title={Moduli Spaces of Instantons on the Taub-NUT Space},
   volume={290},
   DOI={10.1007/s00220-009-0863-8},
   number={2},
   journal={Communications in Mathematical Physics},
   publisher={Springer Science and Business Media LLC},
   author={Cherkis, Sergey A.},
   year={2009},
   month={6},
   pages={719–736}
}

@article{Cherk2,
author = {Cherkis, Sergey},
year = {2009},
month = {02},
pages = {},
title = {Instantons on the Taub-NUT space},
volume = {14},
journal = {Advances in Theoretical and Mathematical Physics},
doi = {10.4310/ATMP.2010.v14.n2.a7}
}

@article{Cherk3,
	author = {{Cherkis}, Sergey A.},
	title = "{Instantons on Gravitons}",
	journal = {Communications in Mathematical Physics},
	year = "2011",
	month = "9",
	volume = {306},
	number = {2},
	pages = {449-483},
	doi = {10.1007/s00220-011-1293-y},
% 	archivePrefix = {arXiv},
% 	eprint = {1007.0044},
% 	primaryClass = {hep-th},
	adsurl = {https://ui.adsabs.harvard.edu/abs/2011CMaPh.306..449C},
	adsnote = {Provided by the SAO/NASA Astrophysics Data System}
}

@article{dinksmir,
author = {Smirnov, Andrey and Dinkins, Hunter},
year = {2020},
month = {09},
pages = {},
title = {Characters of tangent spaces at torus fixed points and 3d-mirror symmetry},
volume = {110},
journal = {Letters in Mathematical Physics},
doi = {10.1007/s11005-020-01292-y}
}

@article{qm,
title = {Stable quasimaps to GIT quotients},
journal = {J. Geom. Phys.},
volume = {75},
pages = {17 - 47},
year = {2014},
author = {Ionuţ Ciocan-Fontanine and Bumsig Kim and Davesh Maulik},
keywords = {Stable maps, Moduli spaces, Stable quotients, ADHM sheaves, Toric compactifications, Quotient stacks},
}

@article{KorZeit,
    author = "Koroteev, Peter and Zeitlin, Anton M.",
    title = "{qKZ/tRS Duality via Quantum K-Theoretic Counts}",
    % eprint = "1802.04463",
    % archivePrefix = "arXiv",
    % primaryClass = "math.AG",
    doi = "10.4310/MRL.2021.v28.n2.a5",
    journal = "Math. Res. Lett.",
    volume = "28",
    number = "2",
    pages = "435--470",
    year = "2021"
}

@online{KS2,
       author = {{Kononov}, Yakov and {Smirnov}, Andrey},
        title = "{Pursuing quantum difference equations II: 3D-mirror symmetry}",
      journal = {arXiv e-prints},
     keywords = {Mathematics - Algebraic Geometry, High Energy Physics - Theory, Mathematical Physics, Mathematics - K-Theory and Homology, Mathematics - Representation Theory},
         year = 2020,
        month = aug,
          eid = {arXiv:2008.06309},
        pages = {arXiv:2008.06309},
archivePrefix = {arXiv},
       eprint = {2008.06309},
 primaryClass = {math.AG},
       adsurl = {https://ui.adsabs.harvard.edu/abs/2020arXiv200806309K},
      adsnote = {Provided by the SAO/NASA Astrophysics Data System}
}

@online{mstoric,
       author = {{Smirnov}, Andrey and {Zhou}, Zijun},
        title = "{3d Mirror Symmetry and Quantum $K$-theory of Hypertoric Varieties}",
      journal = {arXiv e-prints},
     keywords = {Mathematics - Algebraic Geometry, High Energy Physics - Theory, Mathematical Physics},
         year = 2020,
        month = may,
          eid = {arXiv:2006.00118},
        pages = {arXiv:2006.00118},
archivePrefix = {arXiv},
       eprint = {2006.00118},
 primaryClass = {math.AG},
       adsurl = {https://ui.adsabs.harvard.edu/abs/2020arXiv200600118S},
      adsnote = {Provided by the SAO/NASA Astrophysics Data System}
}

@article{dinkms1,
       author = {{Dinkins}, Hunter},
        title = "{Symplectic Duality of $T^*Gr(k,n)$}",
      journal = {Mathematical Research Letters},
         year = 2021,
        pages = {to appear}
}

@article{coul2,
    author = "Braverman, Alexander and Finkelberg, Michael and Nakajima, Hiraku",
    title = "{Towards a mathematical definition of Coulomb branches of $3$-dimensional $\mathcal{N} = 4$ gauge theories, II}",
    % eprint = "1601.03586",
    % archivePrefix = "arXiv",
    % primaryClass = "math.RT",
    doi = "10.4310/ATMP.2018.v22.n5.a1",
    journal = "Adv. Theor. Math. Phys.",
    volume = "22",
    pages = "1071--1147",
    year = "2018"
}

@article{coul1,
author={Hiraku Nakajima},
title={Towards a mathematical definition of Coulomb branches of $3$-dimensional $\mathcal{N}=4$ gauge theoreies, I},
journal={Adv. Theor. Math. Phys.},
year={2016},
volume= {20},
number = {3},
pages={595-669}
}

@online{msflag,
       author = {{Dinkins}, Hunter},
        title = "{3d mirror symmetry of the cotangent bundle of the full flag variety}",
      journal = {arXiv e-prints},
     keywords = {Mathematics - Algebraic Geometry, Mathematical Physics, Mathematics - Quantum Algebra, Mathematics - Representation Theory},
         year = 2020,
        month = nov,
          eid = {arXiv:2011.08603},
        pages = {arXiv:2011.08603},
archivePrefix = {arXiv},
       eprint = {2011.08603},
 primaryClass = {math.AG},
       adsurl = {https://ui.adsabs.harvard.edu/abs/2020arXiv201108603D},
      adsnote = {Provided by the SAO/NASA Astrophysics Data System}
}

@online{KS1,
       author = {{Kononov}, Yakov and {Smirnov}, Andrey},
        title = "{Pursuing quantum difference equations I: stable envelopes of subvarieties}",
      journal = {arXiv e-prints},
     keywords = {Mathematics - Representation Theory, Mathematical Physics, Mathematics - Algebraic Geometry, Mathematics - K-Theory and Homology},
         year = 2020,
        month = apr,
          eid = {arXiv:2004.07862},
        pages = {arXiv:2004.07862},
archivePrefix = {arXiv},
       eprint = {2004.07862},
 primaryClass = {math.RT},
       adsurl = {https://ui.adsabs.harvard.edu/abs/2020arXiv200407862K},
      adsnote = {Provided by the SAO/NASA Astrophysics Data System}
}

@online{indstab1,
       author = {{Okounkov}, Andrei},
        title = "{Inductive construction of stable envelopes}",
      journal = {arXiv e-prints},
         year = 2020,
        month = jul,
          eid = {arXiv:2007.09094},
        pages = {arXiv:2007.09094},
archivePrefix = {arXiv},
       eprint = {2007.09094},
 primaryClass = {math.AG},
       adsurl = {https://ui.adsabs.harvard.edu/abs/2020arXiv200709094O},
      adsnote = {Provided by the SAO/NASA Astrophysics Data System}
}

@online{RSbows,
       author = {{Rimanyi}, R. and {Shou}, Y.},
        title = "{Bow varieties---geometry, combinatorics, characteristic classes}",
      journal = {arXiv e-prints},
     keywords = {Mathematics - Algebraic Geometry, 14C17, 14E15},
         year = 2020,
        month = dec,
          eid = {arXiv:2012.07814},
        pages = {arXiv:2012.07814},
archivePrefix = {arXiv},
       eprint = {2012.07814},
 primaryClass = {math.AG}
}

@article{liu,
    author = "Liu, Henry",
    title = "{Quasimaps and stable pairs}",
    % eprint = "2006.14695",
    % archivePrefix = "arXiv",
    % primaryClass = "math.AG",
    doi = "10.1017/fms.2021.25",
    journal = "Forum Math. Sigma",
    volume = "9",
    pages = "e32",
    year = "2021"
}

@misc{dinkinselliptic,
      title={Elliptic stable envelopes of affine type $A$ quiver varieties}, 
      author={Hunter Dinkins},
      year={2021},
      eprint={2107.09569},
      archivePrefix={arXiv},
      primaryClass={math.AG}
}

\newpage

\noindent
Hunter Dinkins\\
Department of Mathematics,\\
University of North Carolina at Chapel Hill,\\
Chapel Hill, NC 27599-3250, USA\\
hdinkins@live.unc.edu 

\vspace{3 mm}

\noindent
Andrey Smirnov\\
Department of Mathematics,\\
University of North Carolina at Chapel Hill,\\
Chapel Hill, NC 27599-3250, USA\\
Steklov Mathematical Institute \\
of Russian Academy of Sciences, \\
Gubkina str. 8, Moscow, 119991, Russia \\
asmirnov@email.unc.edu

\end{document}